\numberwithin{equation}{section}  
\newtheorem{theorem}{Theorem}[section]
\newtheorem{definition}[theorem]{Definition}    
\newtheorem{lemma}[theorem]{Lemma}
\newtheorem{remark}[theorem]{Remark}
\newtheorem{example}[theorem]{Example}
\newcounter{exocpt}
\renewcommand{\H}{{\mathcal H}}
\def\C{\mathbb C}
\def\R{\mathbb R}
\def\S{{\mathcal S} }
\def\C{\mathbb C}
\def\R{\mathbb R}
\def\N{\mathbb N}
\def\al{\alpha}
\def\be{\beta}
\def\de{\delta}
\def\rh{\rho}
\def\et{\eta}
\def\th{\theta}
\def\GA{\Gamma}
\def\ve{\varepsilon}
\def\la{\lambda}
\def\om{\omega}
\def\va{\varphi}
\def\va{\varphi}
\def\g{\mathfrak{g}}
\def\h{\mathfrak{h}}
\def\t{\mathfrak{t}}
\def\z{\mathfrak{z}}
\def\g{\mathfrak g}
\def\h{\mathfrak h}
\def\ot{\otimes}
\def\la{\lambda}
\def\ve{\varepsilon}
\def\si{\sigma}
\def\om{\omega}
\def\ga{\gamma}
\def\Ga{\Gamma}
\def\ph{\phi}
\def\ch{\chi}
\def\ps{\psi}
\def\si{\sigma}
\def\N{\mathbb{N}}
\def\Z{\mathbb{Z}}
\def\R{\mathbb{R}}
\def\C{\mathbb{C}}
\def\HH{\mathbb{H}}
\def\ol#1{\overline{#1}}
\def\nn{\nonumber}
\def\noop#1{\Vert #1\Vert_{\rm op}}
\def\R{{\mathbb R}}
\def\C{{\mathbb C}}
\def\N{{\mathbb N}}
\def\Z{{\mathbb Z}}
\def\T{{\mathbb T}}
  \def\Id{{\mathbb I}}
\def\A{{\mathcal A}}
\def\B{{\mathcal B}}
\def\D{{\mathcal D}}
\def\F{{\mathcal F}}
\def\H{{\mathcal H}}
\def\K{{\mathcal K}}
\def\ZZ{{\mathcal Z}}
\def\O{{\mathcal O}}
\def\iy{\infty}
\def\ol#1{\overline{#1}}
\def\hb#1{\hbox{#1}}
\def\val#1{\vert #1\vert}
\def\no#1{\Vert #1\Vert }
\def\wh#1{\widehat{#1}}
\def\exp#1{{\rm exp} #1}
\def\ind#1#2{\hb{ind}_{#1}^{#2}}
\def\CC#1{ C_c(#1)}
\def\limk{\lim_{k\to\infty}}
\def\res#1{_{\vert #1}}
\def\hb#1{\hbox{#1}}
\def\val#1{\vert #1\vert}
\def\L1#1{L^1(#1)}
\def\L#1#2{L^{#1}(#2)}
\def\l#1#2{L^{#1}(#2)}
\def\Im{\mathrm{\, Im \,}}
\def\Re{\mathrm{\, Re\, }}
\def\ti{\times }
\def\lef({\left(}
\def\rig){\right)}
\def\lan{\langle}
\def\ran{\rangle}
\begin{document}
\title {Tensor products of $\textit{NCDL}-C^*$-algebras and the  $C^*$-algebra of the Heisenberg motion groups $\T^n\ltimes\HH_n$.  }

\author{Hedi REGEIBA and Jean LUDWIG}

\date{}

\maketitle

\begin{abstract}
We show that the tensor product $A\otimes B$ over $\C$  of two $C^* $-algebras satisfying the \textit{NCDL} conditions has again the same property. 
 We use  this result to describe the $C^* $-algebra of the Heisenberg  motion groups $G_n=\T^n\ltimes\HH_n $ as algebra of operator fields defined over the spectrum of $G_n $.

\end{abstract}

\section{Introduction.}\label{intro}

\subsection{}
The family of $C^*$-algebras with  norm controlled dual limits (NCDL) was introduced in \cite{Lud-Reg1}. 
It is shown  in section \ref{tensor prod}, 
that the  tensor product of two \textit{NCDL}-$C^*$-algebras  is again \textit{NCDL}. 
In section \ref{CstarGone}, we introduce the groups $G_n=\T^n\ltimes \HH_n , n\in\N^*$,  
the semi-direct product  of the torus $\T^n $ acting on the $2n+1 $  dimensional Heisenberg group $\HH_n $.  We recall the topology of the spectrum of the groups $G_n $ and the Fourier transform  of their group $C^*$-algebras. In section \ref{ncdl of Gone},   the norm control of dual limits is then computed  explicitly for the group $G_1 $. This is the main result of the paper. In the last section,  the structure of the $C^* $-algebra of $G_n$ is obtained by combining  the general  
  results on tensor products of $C^* $-algebras and the properties of the algebra of operator fields $\F(C^*(G_1)) $.
  
\subsection{$C^* $-algebras with norm controlled dual limits}.\begin{definition}\label{lcd} 
\rm
\begin{itemize}\label{}$ $

\item  Let $S$ be  a topological space. 
We say that $S $ is \textit{locally compact of step $\leq d$}, if  
there exists a 
finite increasing family $ \emptyset \ne  S_0\subset S_{1}\subset\cdots\subset S_d=S $ of closed 
subsets of $S $,  such that   the subsets $\Gamma_0=S_0$ and  
$ \Gamma_i:=S_{i}\setminus S_{i-1}$,  $i=1,\dots, d$,   
are  locally compact and Hausdorff in  their relative topologies.

\item   Let $S$ be locally compact of step $\leq d$, and let $\{\H_i\}_{i=1, \dots, d}$ be Hilbert spaces. 
For a closed subset $ M \subset S $, denote by $ CB(M,\H_i) $ the unital $C^*$-algebra of all uniformly 
bounded operator fields  $ (\psi(\gamma)\in \B(\H_i))_{\gamma\in M\cap \Gamma_i, i=1,\dots, d}$, 
which are 
operator norm continuous on the subsets $ \Gamma_i \cap M$ for every $  i\in\{0,\dots, d\} $ with  $ \Gamma_i\cap M\ne\emptyset $ and such that $\ga\mapsto \psi(\ga) $ goes to 0 in operator norm if $\ga $ goes to infinity  on $M$.   
We provide the algebra $ CB(M,\H_i) $ with the infinity-norm
$$
\no{\varphi}_{M}=\sup\left\{\no{\varphi(\gamma)}_{\B(\H_i)}\mid M \cap \Gamma_i\ne \emptyset, \, \gamma\in M \cap \Gamma_i\right\}.
$$
\item  Let $S $ be a set. Choose  for every $s\in S $ a Hilbert space $\H_s $. 
We define the $C^* $-algebra $l^\iy(S) $ of uniformly bounded operator fields  defined over $S $ by
\begin{eqnarray*}
 l^\iy(S):=\{(\ph(s))_{s\in S}\vert \ \ph(s)\in  \B(\H_s), s\in S, \ \sup_{s\in S}\noop{\ph(s)}<\iy\}.
 \end{eqnarray*}
Here $\B(\H) $ denotes  the algebra of bounded linear operators on the Hilbert space $\H $. 
 \end{itemize}
\end{definition}

\begin{definition}\label{norcontspec}\rm
Let $\A $ be a separable liminary $ C^* $-algebra, such that   
  the spectrum $\widehat{\A} $ of $\A$ is a locally compact space of step $\leq d$, 
 $$ \emptyset= S_{-1}\subset  S_0\subset S_{1}\subset \cdots \subset S_d=\widehat{\A}. $$  
 Suppose that  for  $0\le i \leq d $ 
there  is a Hilbert space $\H_i $, and for every $ \gamma\in \Gamma_i $  a concrete realization $ (\pi_\gamma,\H_i) $ of 
$\gamma $ on the Hilbert space $ \H_i $ and that the  set $ S_0 $ is the collection of all 
characters of $\A$. 

Denote by $\F:\A\to l^\iy(\wh \A) $ the Fourier transform of $\A $ i.e. for $a\in \A $ let
\begin{eqnarray*}
 \F(a)(\ga)=\hat a(\ga):=\pi_\ga(a)\in\B(\H_i), \ga\in \Ga_i, i=0,\cdots, d.  
 \end{eqnarray*}

We say that $\F(\A )$ is \textit{continuous of step} $\leq d $  
 $\F(A)\res{\GA_i} $ is contained in $CB(\hat A,\H_i) $ for every $i $.

\end{definition}
\begin{definition}\label{norconduli}
\rm  Let $\A $ be a separable liminary $ C^* $-algebra.

We say that the $C^* $-algebra $\A$ has  \textit{norm controlled dual limits (NCDL)}  if the spectrum $\hat \A $ is continuous of step $\leq d $ for some $d\in\N $ ($\emptyset= S_{-1}\subset  S_0\subset S_{1}\subset \cdots \subset S_d=\widehat{\A}  $) and 
\begin{enumerate}
\item  $\F(\A) $ is continuous of step $\leq d $.
\item\label{nrocontspec_2}    For any  $ i=1,\dots, d$  and for any 
converging sequence contained in $ \Gamma_i $ with limit set contained  in $S_{i-1}$, 
there exists a properly converging sub-sequence $\overline{\gamma}=(\gamma_k)_{k\in\N} $ with limit set  $L(\ol\ga)(\subset \S_{i-1}) $,   a constant  $ C>0 $
and for every $ k\in\N $  an involutive  linear mapping $ \sigma_{\overline{\gamma},k}: \F(A)\res{L(\ol\ga)}\to \B(\H_i)$, 
that is  bounded by $ C$, such that
$$
(\forall a\in\A)\quad \lim_{k\to\infty}\no{\F (a)(\gamma_k)-
\sigma_{\overline\gamma,k} (\F (a)\vert_{L(\ol\ga)})}_{\B(\H_i)}=0.
$$
 \end{enumerate}
 \end{definition}
 
\begin{remark}\label{conti finite step implies ncdl}
\rm   It turns out that the norm control of dual limits is  a consequence  of  the properties of  liminary $C^* $-algebras with continuous Fourier transform of finite step:
 \end{remark}

\begin{theorem}[see \cite{Be-Be-Lu}]\label{newcor}
Assume that for the separable liminary  $C^*$-algebra $\A$,  its Fourier transform $\F(\A) $ 
is continuous of step $\leq d $ for some $d\in \N $. Then $\A$ has norm controlled dual limits. 
Specifically
let 
$1\leq \ell \leq d$ be fixed, and  
$\bar{\gamma}=(\gamma_k)_{k} \in \Gamma_\ell$ be a properly convergent  sequence with limit set $L(\ol\ga) $ outside $\Gamma_\ell$. 
Then there exists a sequence   $(\sigma_{\bar{\gamma}, k})_k$ 
of completely positive and completely contractive maps 
$\sigma_{\bar{\gamma}, k}\colon \hat{\A}\res{L(\ol\ga)}  
\to \B(\H_\ell)$ 
such that 
$$(\forall a\in\A)\quad  
\lim_{k\to \infty} \no{\F(a) (\gamma_k)- \sigma_{\bar{\gamma}, k}(\F(a)\vert_{\hat{\A}\res{L(\ol\ga)} })}_{\B(\H_{\ell})}=0.$$
\end{theorem}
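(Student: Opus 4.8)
The plan is to reduce the statement to a concrete, hands-on construction of the maps $\sigma_{\bar\gamma,k}$ by exploiting the finite-step structure of $\F(\A)$. Since $\bar\gamma=(\gamma_k)_k$ lies in a single stratum $\Gamma_\ell$ and converges properly with limit set $L(\ol\ga)\subset S_{\ell-1}$, the first step is to understand what $L(\ol\ga)$ looks like: it is a compact subset of $\bigcup_{i<\ell}\Gamma_i$, and by passing to a further subsequence we may assume it is carried by a single lower stratum, or at worst decompose it into finitely many pieces according to which $\Gamma_i$ each limit point lies in. For each $\rho\in L(\ol\ga)$ we have a concrete realization $(\pi_\rho,\H_i)$, and the hypothesis that $\F(\A)$ is continuous of step $\leq d$ tells us that $a\mapsto \hat a(\gamma_k)$ and $a\mapsto (\hat a(\rho))_{\rho\in L(\ol\ga)}$ are $*$-representations of $\A$ on $\H_\ell$ and on $\bigoplus_\rho \H_i$ respectively, with the latter weakly containing the former in the appropriate sense.

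**Next I would** invoke the structure of liminary (CCR) $C^*$-algebras: because $\A$ is separable and type I, every representation is a direct integral of irreducibles, and the key analytic fact is that the representation $\pi_{\gamma_k}$ restricted to $\A$ is, up to an arbitrarily small perturbation in norm \emph{on each fixed element $a$}, built from the representations $\{\pi_\rho : \rho\in L(\ol\ga)\}$ together with the ideal structure coming from the filtration $S_{\ell-1}\subset S_\ell$. Concretely, one uses that $C^*(\A\res{S_\ell})/C^*(\A\res{S_{\ell-1}})$ has spectrum $\Gamma_\ell$ with Hilbert space bundle $\H_\ell$, so the continuity of $\F(\A)$ on $\Gamma_\ell$ forces $\hat a(\gamma_k)$ to have a limiting behaviour governed by the boundary. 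The natural candidate for $\sigma_{\bar\gamma,k}$ is then a compression-type map: choose for each $k$ a finite-dimensional (or finite-rank approximated) projection $P_k$ on $\H_\ell$ adapted to $\gamma_k$, identify its range approximately with a corresponding subspace of $\bigoplus_{\rho\in L(\ol\ga)}\H_i$ via partial isometries built from matrix coefficients, and set $\sigma_{\bar\gamma,k}(\psi) = V_k^* \big(\bigoplus_\rho \psi(\rho)\big) V_k$ for a suitable contraction $V_k$. Being a compression of a $*$-representation by a contraction, each $\sigma_{\bar\gamma,k}$ is automatically completely positive and completely contractive, which gives the asserted properties for free; the content is entirely in choosing $V_k$ so that $\sigma_{\bar\gamma,k}(\F(a)\res{L(\ol\ga)})$ approximates $\F(a)(\gamma_k)$ in norm.

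**The hard part will be** precisely this last point: producing the intertwiners $V_k$ and proving the norm convergence $\no{\F(a)(\gamma_k) - \sigma_{\bar\gamma,k}(\F(a)\res{L(\ol\ga)})}_{\B(\H_\ell)}\to 0$ uniformly enough. The subtlety is that norm (not just weak or strong) convergence is demanded, so one cannot simply appeal to weak-$*$ convergence of the representations; instead one must use the liminarity (so that $\hat a(\gamma)$ is compact, and products $\hat a(\gamma)\hat b(\gamma)$ behave well) together with the continuity of the field on $\Gamma_\ell$ to control tails, and an equicontinuity/density argument reducing from general $a\in\A$ to a dense subalgebra or to a set of generators whose Fourier transforms are norm-controlled. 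One standard device is to first establish the estimate for $a$ in a dense $*$-subalgebra where $\F(a)$ has good decay and continuity, obtain a bound on $\no{\sigma_{\bar\gamma,k}}$ independent of $k$ (here the complete contractivity does the job, with $C=1$), and then extend by density using the uniform bound. I would organize the proof so that the bundle-theoretic bookkeeping over $L(\ol\ga)$ — possibly splitting $L(\ol\ga)$ across several strata and handling each with its own block of $V_k$ — is done first and cleanly, and only then carry out the approximation estimate, citing \cite{Be-Be-Lu} for the technical heart if a self-contained argument would be too long; since the theorem is explicitly attributed there, it is legitimate here to present the construction and the reduction, and defer the delicate norm-estimate to that reference.
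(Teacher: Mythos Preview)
The paper does not contain a proof of this theorem at all: it is stated with the attribution ``[see \cite{Be-Be-Lu}]'' and immediately followed by Remark~\ref{importanve NCDL}, so the result is imported as a black box from the cited reference and there is no ``paper's own proof'' to compare your proposal against.

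As a sketch your outline is structurally reasonable --- in particular the observation that maps of the form $\psi\mapsto V_k^*\big(\bigoplus_\rho\psi(\rho)\big)V_k$ with $V_k$ a contraction are automatically completely positive and completely contractive is exactly the right reason those properties hold --- but it is a plan rather than a proof. The actual construction of the intertwiners $V_k$ and the verification of \emph{norm} (not weak or strong) convergence are left entirely unspecified in your proposal, and these constitute the whole content of the theorem; you yourself note this and propose deferring to \cite{Be-Be-Lu}. Since that is precisely what the present paper does, your proposal is consistent with the paper's treatment, but you should be aware that nothing in the present paper supplies the missing argument.
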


\begin{remark}\label{importanve NCDL}
\rm   Let us mention that any $NCDL -C^*$-algebra $A $ is determined up to an isomorphism by its spectrum $\hat A $ 
and the family $\si_{\ol\ga}\ ( \ol\ga \text{ any properly converging sequence in }\hat A) $ of norm controls (see \cite{Be-Be-Lu} and \cite{Lud-Reg1}). Therefore in order to determine the structure of a given $NCDL-C^* $-algebra and to understand its Fourier transform,  it is essential to have a precise description of its spectrum and of its  norm control of dual limits.

Let $A $ be a separable $C^* $-algebra and let $\ol\ga=(\ga_k)_k\in\N $ be a properly converging sequence of irreducible unitary representations of $A $. Let $L $ be the set of limits in $\hat A $ of the sequence $\ol\ga $. By definition of the topology of $\hat A $, (see \cite{Dixmier}), there exists for every $\si \in L $ and every element $\xi_\si$ in the Hilbert space $\H_\si  $ of $\si $ a sequence $(\xi_k\in \H_{\ga_k})_k $, such that the sequence of coefficients $c^{\ga_k}_{\xi_k} $ converges weakly to the coefficient $c^{\si}_{\xi_\si} $. This means that 
\begin{eqnarray*} 
 \langle \si(a)\xi_\si,\xi_\si\rangle_{\H_\si} &= & 
 \limk \langle \ga_k(a)\xi_k,\xi_k\rangle_{\H_{\ga_k}},\ a\in A. 
\end{eqnarray*}
By  a theorem of Fell (\cite{Fell-1}) we have that
\begin{eqnarray*} 
 \limk \noop{\pi_k(a)} &= &\sup_{\si\in L}\noop{\si(a)}, a\in A. 
\end{eqnarray*}
In this paper these sequences   $(\xi_k)_k $ are  explicitly determined for generic sequences $\ol{\ga_k} \subset \hat G_1 $ and it is shown how to  construct the control from these data (see the proof of theorem \ref{estimates of sequences rne 0}). 

 \end{remark}

 \section{Tensor Products of $C^*$-algebras and \textit{NCDL} $C^*$-algebras.}\label{tensor prod}×
 
\subsection{Tensor products}
(see \cite{Bal} for details) \ 
If $A$ and $B$ are $C^*$-algebras, we can  form their algebraic tensor product $A\otimes B$ over $\C$. The vector space 
 $A\otimes B$ has a natural structure 
 as a $*$-algebra 
 with multiplication
 $$(a_1\otimes b_1)(a_2\otimes b_2)=a_1a_2\otimes b_1b_2$$
 and involution $(a\otimes b)^*=a^*\otimes b^*$. 
 If $\ga$ is a $C^*$-norm on $A\otimes B$, we will write $A\otimes_\ga B$ for the completion.
 If $\pi_A$ and $\pi_B$ are representations of $A$ and $B$ on 
 Hilbert spaces $\H_1$ and $\H_2$ respectively, we can form the representation  $\pi=\pi_A\otimes\pi_B$ of $A\otimes B$ on $\H_1\otimes\H_2$ by
 $\pi(a\otimes b)=\pi_A(a)\otimes\pi_B(b)$. For any $\pi_A$ and $\pi_B$ we have 
 $\no{(\pi_A\otimes\pi_B)(\sum_{i=1}^na_i\otimes b_i)}\leq\sum_{i=1}^n\no{a_i}\no{b_i}$, so the norm 
 $\Vert\sum_{i=1}^na_i\otimes b_i\Vert_{min}=\sup\Vert(\pi_A\otimes\pi_B)(\sum_{i=1}^na_i\otimes b_i)\Vert$ 
 is finite and hence a $C^*$-norm called the 
 minimal $C^*$-norm. The completion of $A\otimes B$ with respect to this norm is written $A\otimes_{min}B$ and called the minimal or spatial tensor product
 of $A$ and $B$. 
 For any representation $\pi$ of $A\otimes B$ we have $\no{\pi(\sum_{i=1}^na_i\otimes b_i)}\leq\sum_{i=1}^n\no{a_i}\no{b_i}$, so the 
 norm $\no{\sum_{i=1}^na_i\otimes b_i}_{max}=\sup\no{\pi(\sum_{i=1}^na_i\otimes b_i)}$ is finite and hence a $C^*$-norm called the maximal $C^*$-norm.
 The completion is denoted $A\otimes_{max}B$, and called the maximal tensor product of $A$ and $B$. 
 
 Let $A$ a $C^*$-algebra, $A$ is called nuclear if for every $C^*$-algebra $B$, there is a unique $C^*$-norm on $A\otimes B$, i.e $A\otimes_{min}B=A\otimes_{max}B$.   We write for this nuclear $C^* $-algebra $A $ and any other $C^* $-algebra $B $
 \begin{eqnarray*}
 C=A\ol \ot B
 \end{eqnarray*}
for the $C^* $-algebra $A\ot_{\max}B=A\ot_{\min }B $. 

Every type $I $ $C^* $-algebra is nuclear. 

 \begin{theorem}(\cite{Bal} IV.3.4.21)
  If $A$ and $B$ are arbitrary $C^*$-algebras, there is an injective map 
  $$\varPi:\wh A\ti\wh B\to\wh{A\otimes B}$$
  given by $\varPi(\rh,\si)=\rh\otimes\si.$ This is a continuous map relative to the natural topologies and drops to a well-defined map, also denoted $\varPi$, 
  from $Prim(A)\ti Prim(B)$ to $Prim(A\otimes B)$; this $\varPi$ is injective, continuous (it preserves containment in the appropriate sense), and its range 
  is dense in $Prim(A\otimes B)$ since the intersection of the kernels of the representations $\{\rh\otimes\si:\ \rh\in\wh A,\ \si\in\wh B\}$ is $0$.
 \end{theorem}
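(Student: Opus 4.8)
\medskip
\noindent\emph{Proof proposal.}
The plan is to verify the four assertions in turn: that $\varPi$ takes values in $\wh{A\ot B}$, that it is continuous, that it descends to an injective, containment-preserving map on $Prim(A)\ti Prim(B)$ and is itself injective on $\wh A\ti\wh B$, and that its range is dense. Throughout I take $A\ot B$ completed in the minimal ($=$ spatial) $C^*$-norm; for the nuclear algebras used below this is the unique $C^*$-completion, so nothing is lost there. The two genuinely analytic inputs are the commutation theorem $(M\,\overline{\ot}\,N)'=M'\,\overline{\ot}\,N'$ for von Neumann tensor products and the faithfulness of the spatial tensor product of faithful $*$-representations.

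First I would check that $\rh\ot\si$ is irreducible for $\rh\in\wh A$, $\si\in\wh B$. Passing to weak closures, the commutant of $(\rh\ot\si)(A\ot B)$ in $\B(\H_\rh\ot\H_\si)$ is $\rh(A)'\,\overline{\ot}\,\si(B)'$ by the commutation theorem; since $\rh,\si$ are irreducible this is $\C\Id_{\H_\rh}\,\overline{\ot}\,\C\Id_{\H_\si}=\C\Id$, so $\rh\ot\si\in\wh{A\ot B}$ and $\varPi$ is well defined. For continuity I would use that convergence in $\wh{\,\cdot\,}$ is tested on matrix coefficients: if $\rh_j\to\rh$ and $\si_j\to\si$, choose vectors $\xi_j\to\xi$ and $\eta_j\to\eta$ realizing the coefficient convergences; then for elementary tensors $\sp{(\rh_j\ot\si_j)(a\ot b)(\xi_j\ot\eta_j)}{\xi_j\ot\eta_j}=\sp{\rh_j(a)\xi_j}{\xi_j}\,\sp{\si_j(b)\eta_j}{\eta_j}\to\sp{(\rh\ot\si)(a\ot b)(\xi\ot\eta)}{\xi\ot\eta}$, and since elementary tensors span a dense $*$-subalgebra of $A\ot B$ and the $\xi\ot\eta$ are total in $\H_\rh\ot\H_\si$, this forces $\rh_j\ot\si_j\to\rh\ot\si$.

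Next I would identify the kernel. Since the induced representations $\bar\rh\colon A/\ker{\rh}\to\B(\H_\rh)$ and $\bar\si\colon B/\ker{\si}\to\B(\H_\si)$ are faithful, their spatial tensor product is faithful on $(A/\ker{\rh})\ot_{\min}(B/\ker{\si})$; composing with the canonical identification of that algebra with $(A\ot B)/J$, where $J$ is the closed ideal generated by $\ker{\rh}\ot B$ and $A\ot\ker{\si}$, gives $\ker{\rh\ot\si}=J$. From this: (i) $\ker{\rh\ot\si}$ depends only on $\ker{\rh}$ and $\ker{\si}$ and is monotone in each, so $\varPi$ descends to $Prim(A)\ti Prim(B)\to Prim(A\ot B)$, preserves containment, and is continuous for the hull--kernel topologies; (ii) one recovers $\ker{\rh}=\{a\in A:\ a\ot b\in\ker{\rh\ot\si}\ \text{for all}\ b\in B\}$ — if $\rh(a)\ne0$ pick $b$ with $\si(b)\ne0$, so $\rh(a)\ot\si(b)\ne0$ — and symmetrically $\ker{\si}$, whence the descended map is injective. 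For injectivity of $\varPi$ on $\wh A\ti\wh B$ itself I would pass to unitizations (an irreducible representation extends uniquely to the unitization, and $A\ot B\subset A^{+}\ot B^{+}$), restrict a unitary equivalence $\rh_1\ot\si_1\cong\rh_2\ot\si_2$ to $A\ot\C\Id$ to obtain $\rh_1\ot\Id_{\H_{\si_1}}\cong\rh_2\ot\Id_{\H_{\si_2}}$, and then compress by a minimal projection of the commutant (on each side $\cong\B(\H_{\si_i})$): the resulting subrepresentations are unitarily equivalent to $\rh_1$ and $\rh_2$ via a corner of the given equivalence, so $\rh_1\cong\rh_2$, and symmetrically $\si_1\cong\si_2$.

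Finally, density of the range in $Prim(A\ot B)$ is equivalent to $\bigcap\{\ker{\rh\ot\si}:\rh\in\wh A,\ \si\in\wh B\}=0$. Taking the faithful representations $\rh_0=\bigoplus_{\rh\in\wh A}\rh$ and $\si_0=\bigoplus_{\si\in\wh B}\si$ (irreducible representations separate points of a $C^*$-algebra), the representation $\rh_0\ot_{\min}\si_0\cong\bigoplus_{\rh,\si}(\rh\ot\si)$ is faithful on $A\ot_{\min}B$, again by faithfulness of the spatial tensor product, so $\bigcap_{\rh,\si}\ker{\rh\ot\si}=\ker{\rh_0\ot\si_0}=0$; hence no nonzero ideal lies in all the $\ker{\rh\ot\si}$, which is exactly density of the range. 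I expect the real obstacle to be the two analytic inputs flagged at the start — the commutation theorem and the faithfulness/exactness of $\ot_{\min}$ on ideals of the above special form — both standard but substantial; granting them, irreducibility, the kernel formula, the recovery of the two factors, the multiplicity argument, and the dictionary between $\wh{\,\cdot\,}$ and $Prim$ are routine, and for the nuclear $C^*$-algebras of the sequel they may simply be quoted.
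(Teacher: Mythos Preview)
The paper does not prove this theorem at all: it is quoted verbatim from Blackadar (\cite{Bal}~IV.3.4.21) as a black box, so there is no ``paper's own proof'' to compare against. Your outline is essentially the standard argument one finds in the reference and is correct in its broad strokes --- irreducibility via the commutation theorem, continuity through matrix coefficients, recovery of $\ker\rh$ and $\ker\si$ from $\ker{\rh\ot\si}$, and density via faithfulness of $\rh_0\ot_{\min}\si_0$.

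One point to tighten: your identification of $\ker{\rh\ot\si}$ with the closed ideal $J$ generated by $\ker\rh\ot B+A\ot\ker\si$ is not valid for \emph{arbitrary} $C^*$-algebras; the surjection $(A\ot_{\min}B)/J\to (A/\ker\rh)\ot_{\min}(B/\ker\si)$ can fail to be injective when neither factor is exact. You flag this yourself as one of the ``analytic inputs'', but it is not a theorem to be quoted in this generality. Fortunately you never actually need the explicit ideal description: for (i) the map dropping to $Prim$, it suffices that $\ker{\rh\ot\si}$ equals the kernel of the canonical map $A\ot_{\min}B\to (A/\ker\rh)\ot_{\min}(B/\ker\si)$ (this uses only faithfulness of $\bar\rh\ot_{\min}\bar\si$, which \emph{is} a general fact), and that kernel manifestly depends only on $(\ker\rh,\ker\si)$ and is monotone; for (ii) the recovery of $\ker\rh$ and $\ker\si$ you already argue directly from $(\rh\ot\si)(a\ot b)=\rh(a)\ot\si(b)$. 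So the conclusions stand once you reroute around the ideal formula. For the applications in this paper all algebras are nuclear (hence exact) and the issue evaporates anyway.
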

\begin{theorem}\label{thbijwhAtiwhBandwhAotiB}(\cite{Bal} IV.3.4.27)
 If $\pi\in\wh{A\otimes B}$ and $\pi_A$ or $\pi_B$ is of type I, the other is also type $I$ and $\pi\cong\rh\otimes\si$ for $\rh$, $\si$ 
 irreducible representations quasi-equivalent to $\pi_A$ and $\pi_B$ respectively. Thus, if $A$ or $B$ is type $I$ (no separability necessary), the map 
 $\varPi:\wh A\ti\wh B\to\wh{A\otimes B}$ is surjective and it is easily verified to be a homeomorphism.
\end{theorem}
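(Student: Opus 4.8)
The plan is to translate the statement into a fact about von Neumann algebras. Given $\pi\in\wh{A\ot B}$ on a Hilbert space $\H$, I would first produce the two ``component'' representations. Since $A\ot B$ need not be unital, for $a\in A$ and an approximate unit $(e_\la)$ of $B$ the net $\pi(a\ot e_\la)$ converges strongly to an operator $\pi_A(a)$, and this defines a representation $\pi_A$ of $A$ on $\H$; symmetrically one obtains $\pi_B$. One checks that $\pi_A(A)$ and $\pi_B(B)$ commute and that $\pi(a\ot b)=\pi_A(a)\pi_B(b)$, so the von Neumann algebras $\mathcal M:=\pi_A(A)''$ and $\mathcal N:=\pi_B(B)''$ commute (i.e.\ $\mathcal N\subseteq\mathcal M'$), while together they generate $\pi(A\ot B)''=\B(\H)$ by irreducibility of $\pi$. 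These $\pi_A,\pi_B$ are the representations referred to in the statement.

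Next I would show $\mathcal M$ and $\mathcal N$ are factors: an element $z$ of the centre $\mathcal M\cap\mathcal M'$ commutes with $\mathcal M$ (being central) and with $\mathcal N$ (as $z\in\mathcal M$ and $\mathcal N\subseteq\mathcal M'$), hence with $\mathcal M\vee\mathcal N=\B(\H)$, so $z$ is a scalar; likewise for $\mathcal N$. The hypothesis that $\pi_A$ is of type $I$ says precisely that the factor $\mathcal M$ is of type $I$, hence $\mathcal M\cong\B(K_1)$ and, by the structure of type $I$ factors and their commutants, there is a unitary $\H\cong K_1\ot K_2$ under which $\mathcal M=\B(K_1)\ot\C$ and $\mathcal M'=\C\ot\B(K_2)$. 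Since $\mathcal N\subseteq\mathcal M'$, this identifies $\mathcal N=\C\ot\mathcal N_2$ for a von Neumann subalgebra $\mathcal N_2\subseteq\B(K_2)$, which is again a factor, and $\mathcal M\vee\mathcal N=\B(\H)$ forces $\B(K_1)\ot\mathcal N_2=\B(K_1\ot K_2)$, i.e.\ $\mathcal N_2=\B(K_2)$; in particular $\mathcal N$, hence $\pi_B$, is of type $I$, giving ``the other is also type $I$''. Finally $\pi_A(a)\in\mathcal M=\B(K_1)\ot\C$ and $\pi_B(b)\in\mathcal N=\C\ot\B(K_2)$ define irreducible representations $\rh$ of $A$ on $K_1$ and $\si$ of $B$ on $K_2$ with $\pi_A(a)=\rh(a)\ot 1$, $\pi_B(b)=1\ot\si(b)$, so $\pi(a\ot b)=\pi_A(a)\pi_B(b)=\rh(a)\ot\si(b)$ and $\pi\cong\rh\ot\si$; moreover $\pi_A\cong\rh\ot 1_{K_2}$ is an amplification of $\rh$, hence quasi-equivalent to it, and similarly $\pi_B$ is quasi-equivalent to $\si$.

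For the last assertion, if $A$ (say) is type $I$ then every representation of $A$ is type $I$, so every $\pi_A$ arising as above is type $I$, and the first part shows every $\pi\in\wh{A\ot B}$ equals $\rh\ot\si=\varPi(\rh,\si)$; thus $\varPi$ is onto, and it is a continuous injection by the preceding theorem (\cite{Bal} IV.3.4.21). To upgrade this to a homeomorphism it suffices to see that the two coordinate maps $\wh{A\ot B}\to\wh A$ and $\wh{A\ot B}\to\wh B$, well defined by the injectivity of $\varPi$, are continuous. Passing to primitive ideals, the preimage under $\mathrm{ker}(\rh\ot\si)\mapsto\mathrm{ker}(\rh)$ of the hull of an ideal $J\subseteq A$ is the hull of the closed ideal of $A\ot B$ generated by $J\ot B$, because $\rh(J)=0$ if and only if $(\rh\ot\si)(J\ot B)=0$ (if $\rh(a)\neq0$ pick $b$ with $\si(b)\neq0$). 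Since these hulls exhaust the closed sets, the map is continuous for the Jacobson topologies, and therefore so is the induced map of spectra.

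The real work sits in the von Neumann algebra input: checking that $\mathcal M$ and $\mathcal N$ are factors and, above all, invoking the tensor splitting $\H\cong K_1\ot K_2$ that simultaneously ``diagonalises'' a type $I$ factor and its commutant, together with the care needed to define $\pi_A$ and $\pi_B$ in the non-unital case. The continuity of $\varPi^{-1}$ also quietly uses the good behaviour of ideals under the tensor product, but this is harmless here since being type $I$ forces nuclearity, so $A\ot B$ is unambiguous.
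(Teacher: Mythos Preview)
The paper does not give its own proof of this theorem: it is quoted verbatim from Blackadar \cite{Bal}, IV.3.4.27, and used as a black box in the proof of Theorem~\ref{tensor ncdl}. So there is no argument in the paper to compare against.

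That said, your sketch is a faithful reconstruction of the standard proof as it appears in Blackadar's book (and earlier in Takesaki). The key steps --- building $\pi_A,\pi_B$ via approximate units, observing that $\mathcal M=\pi_A(A)''$ and $\mathcal N=\pi_B(B)''$ are commuting factors generating $\B(\H)$, invoking the tensor splitting of a type~$I$ factor and its commutant, and reading off $\rh,\si$ --- are exactly the ones used there. Your argument that $\mathcal N_2=\B(K_2)$ is correct (one usually phrases it via commutants: $\mathcal N'=\mathcal M$ since $\mathcal M\vee\mathcal N=\B(\H)$ and $\mathcal N\subseteq\mathcal M'$ with $\mathcal M$ a factor, whence $\mathcal N=\mathcal M'=\C\ot\B(K_2)$), and the quasi-equivalence claims follow as you say. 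The continuity of $\varPi^{-1}$ via hulls of ideals is also the standard route; your identification of the preimage of $\mathrm{hull}(J)$ with $\mathrm{hull}(\overline{J\ot B})$ is correct, though one should note that for this one needs that $\si$ is nonzero (automatic here) and that the closed ideals of $A\ot B$ are well behaved with respect to the tensor factors, which is where nuclearity (guaranteed by the type~$I$ hypothesis) enters, as you remark.
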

\begin{theorem}\label{tensor ncdl}
 Let $A$ and $B$ two \textit{NCDL}-$C^*$-algebras. Then the tensor product $A\otimes B$ over $\C$ is also a 
 \textit{NCDL}-$C^*$-algebra. 
\end{theorem}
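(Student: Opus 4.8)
\textit{Proof plan.}\ Since $A$ and $B$ are liminary they are of type $I$, hence nuclear, so there is a unique $C^*$-norm on the algebraic tensor product $A\otimes B$ and $C := A\ol\ot B$ is a separable $C^*$-algebra. One first checks that $C$ is again liminary: by Theorem \ref{thbijwhAtiwhBandwhAotiB} every $\pi\in\wh C$ is of the form $\rh\otimes\si$ with $\rh\in\wh A$, $\si\in\wh B$, and $(\rh\otimes\si)(C)$ is the operator-norm closure of the linear span of $\{\rh(a)\ot\si(b):a\in A,\,b\in B\}$, which equals $\K(\H_\rh)\odot\K(\H_\si)$ since $\rh(A)=\K(\H_\rh)$ and $\si(B)=\K(\H_\si)$, hence equals $\K(\H_\rh\ot\H_\si)$. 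The plan is then to verify the hypothesis of Theorem \ref{newcor} for $C$ --- that $\wh C$ admits a step decomposition along which $\F(C)$ is continuous --- so that the norm control of the dual limits follows automatically.

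For the decomposition I would transport the given ones $\es = S^A_{-1}\subset\cdots\subset S^A_{d_A} = \wh A$ and $\es = S^B_{-1}\subset\cdots\subset S^B_{d_B} = \wh B$, with strata $\GA^A_i$, $\GA^B_j$ and Hilbert spaces $\H^A_i$, $\H^B_j$, through the homeomorphism $\varPi(\rh,\si) = \rh\otimes\si$ of Theorem \ref{thbijwhAtiwhBandwhAotiB}. Ordering the index pairs lexicographically, $(0,0) = p_0\prec p_1\prec\cdots\prec p_N$ with $N = (d_A+1)(d_B+1)-1$, I would put $\widetilde S_m := \varPi\bigl(\bigcup_{l\le m}\GA^A_{i(p_l)}\ti\GA^B_{j(p_l)}\bigr)$. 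The key point is that each $\widetilde S_m$ is closed: for $p_m = (i_0,j_0)$ the pairs $\preceq p_m$ are exactly $\{(i,j):i<i_0\}\cup\{(i_0,j):j\le j_0\}$, so $\bigcup_{l\le m}\GA^A_{i(p_l)}\ti\GA^B_{j(p_l)} = (S^A_{i_0-1}\ti\wh B)\cup(\GA^A_{i_0}\ti S^B_{j_0})$, and a short examination of convergent nets --- splitting according to which of the two pieces is hit cofinally, and using that $S^A_{i_0-1}$, $S^B_{j_0}$ and $\ol{\GA^A_{i_0}}\subseteq S^A_{i_0}$ are closed --- shows this set is closed in $\wh A\ti\wh B$. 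Then $\widetilde S_0\subset\cdots\subset\widetilde S_N = \wh C$ is a decomposition of step $\le N$ whose strata $\widetilde\GA_m = \varPi(\GA^A_{i(p_m)}\ti\GA^B_{j(p_m)})$ are locally compact Hausdorff (finite products of such spaces), which carry the concrete realizations $\rh\otimes\si$ on the fixed Hilbert space $\widetilde\H_m := \H^A_{i(p_m)}\ot\H^B_{j(p_m)}$, and for which $\widetilde S_0 = \varPi(S^A_0\ti S^B_0)$ is precisely the set of characters of $C$.

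It remains to prove that $\F(C)$ is continuous of step $\le N$, i.e. that for every $c\in C$ and every $m$ the field $\wh c\res{\widetilde\GA_m}$ lies in $CB(\wh C,\widetilde\H_m)$. Operator-norm continuity on $\widetilde\GA_m\cong\GA^A_{i(p_m)}\ti\GA^B_{j(p_m)}$ I would get first for elementary tensors $c = a\ot b$, where $(\rh\otimes\si)(a\ot b) = \rh(a)\ot\si(b)$, combining the norm continuity of $\rh\mapsto\rh(a)$ and $\si\mapsto\si(b)$ on the respective strata (the hypothesis that $\F(A)$, $\F(B)$ are continuous of finite step) with the joint norm continuity of $(T,S)\mapsto T\ot S$ on norm-bounded sets, $\no{T_1\ot S_1 - T_2\ot S_2}\le\no{T_1-T_2}\,\no{S_1} + \no{T_2}\,\no{S_1-S_2}$; this extends by linearity to the dense subalgebra $A\otimes B$, and then, since $\no{(\rh\otimes\si)(c) - (\rh\otimes\si)(c')}\le\no{c-c'}_C$ uniformly in $(\rh,\si)$, to all of $C$ by uniform approximation. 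Decay at infinity is automatic: for any $C^*$-algebra, any $c$ and any $\ve>0$, the set $\{\ga\in\wh C:\no{\ga(c)}\ge\ve\}$ is compact (see \cite{Dixmier}), so $\wh c(\ga)\to 0$ as $\ga\to\iy$ in $\wh C$, a fortiori along each stratum. Theorem \ref{newcor} then gives that $C = A\ot B$ has norm controlled dual limits.

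The only genuine difficulty I anticipate is combinatorial rather than analytic: arranging the decomposition of $\wh C$ so that the filtration sets are closed and each stratum carries a \emph{single} Hilbert space. This is what forces the lexicographic refinement instead of the more symmetric decomposition by $i+j$, whose strata would mix pieces $\GA^A_i\ti\GA^B_j$ with Hilbert spaces of different dimensions. Everything else is either quoted (Theorems \ref{thbijwhAtiwhBandwhAotiB} and \ref{newcor}, and the compactness of the sublevel sets of $\ga\mapsto\no{\ga(c)}$) or a routine $C^*$-estimate.
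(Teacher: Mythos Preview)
Your proposal is correct and follows essentially the same route as the paper: both refine the product $\wh A\times\wh B$ into a step decomposition whose strata are the single products $\GA^A_i\times\GA^B_j$ (you order the pairs lexicographically, the paper orders them by $i+j$ first and then arbitrarily within each level), verify that $\F(C)$ is continuous of finite step on these strata, and invoke Theorem~\ref{newcor}. The only substantive addition in the paper is that it goes on to write down the norm controls for $C$ explicitly as tensor products $\si_{\ol{\ga^C},k}=\si_{\ol{\ga^A},k}\otimes\si_{\ol{\ga^B},k}$ of those for $A$ and $B$ --- not needed for the bare statement, but used later for $C^*(G_n)$.
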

\begin{proof}
 Let $A$,  $B$ be two \textit{NCDL}-$C^*$-algebras. Since both algebras  are by definition liminary, they are both of type I. Furthermore $C $ is separable, since so are $A $ and $B $. 
 By theorem \ref{thbijwhAtiwhBandwhAotiB} we have that $\wh C\simeq\wh A\ti\wh B$ and so $C $ is liminary too. 
We can write the spectrum of $C$ in the following way.
There are  increasing finite families $S_0^A\subset S_1^A\ldots\subset S_{n}^A=\wh A,\ n\in\N$, resp. $S_0^B\subset S_1^B\ldots\subset S_{m}^B=\wh B,\ m\in\N$, 
of closed subsets  of the spectrum $\wh A$ of $A$, resp. of the spectrum $\wh B$ of  $B$, 
such that the subsets $\GA^{A}_j:=S_j\setminus S_{j-1}, j=1,\cdots, n,  $ resp. $\GA^{B}_j:=S^{B}_j\setminus S^{B}_{j-1}, j=1,\cdots, m$ have  a separated  relative topology.
Then the spectrum of $C$  is the disjoint union of the subsets
$$\GA^{C}_{i,j} =\GA^{A}_i\times \GA^{B}_j, 0\leq i\leq n, 0\leq j\leq m,$$
and each subset $\GA^C_{i,j} $ is locally compact with a Hausdorff topology. 
Let for $k\in\{0,\cdots,n+m \} $
\begin{eqnarray*}
 T^{C}_k:=\bigcup_{i+j\leq k}\GA^{C}_{i,j}.
 \end{eqnarray*}
Then for every $0\leq i\leq n,0\leq j\leq m
, $ the subset 
\begin{eqnarray*}
 R^{C}_{i,j}:=\Ga^{C}_{i,j}\cup T^{C}_{i+j-1}
 \end{eqnarray*}
is closed in $\wh C $. For every $0\leq k\leq n+m $ we choose a total order on  the family 
\begin{eqnarray*}
R^{C}_k :=\{\GA^{C}_{i,j}\vert i+j=k\}
 \end{eqnarray*}
 and we say that $\GA^{C}_{i,j}\subset\GA_{i',j'}^{C} $ whenever $i+j<i'+j' $. This gives us  a total order on the family of sets $\{\GA^{C}_{i,j}\vert \ 0\leq i\leq n,0\leq j\leq m\}.  $ 
Furthermore 
\begin{eqnarray*}
 \wh A\times\wh B =\bigcup_{i,j}\GA^{C}_{i,j}
 \end{eqnarray*}
and the subsets 
\begin{eqnarray*}
 S^{C}_{i,j}:=\bigcup _{(i',j')\leq (i,j)}\GA^{C}_{i',j'}
 \end{eqnarray*}
are closed in $\wh C $ for any pair $(i,j) $. 

It is easy to see that $C$ has all the required properties to be \textit{NCDL}. Indeed for 
$c\in C$ it is  immediately seen that the  operator fields $\wh c$ defined on $\wh C$ 
by $\wh c(\rh\otimes\si)=\rh\otimes \si(c)$, $\rh\in\wh A,\ 
\si\in\wh B$ operate on the Hilbert-spaces $\H_i\otimes\H_j $ and it is continuous on the different subsets $\GA_{i,j}^C$ and tends to $0$ at infinity, since this is true for elementary tensors $a\otimes b $. 

Hence  the Fourier transform of $A\otimes B $ is continuous of some finite step and so Theorem \ref{newcor} tells us that $C $ has the NCDL property. 

Let us see how to build the norm control  of  dual limits for $C=A\ol\otimes B $.
 \begin{enumerate}\label{}
\item If $\ol{(\ga^C)}=(\ga_k^C)_k$ is a sequence in $\GA_{i,j}^C$ which admits its limits in 
$T^{C}_{i+j-2}$, then for a properly convergent subsequence we have 
$$\ga_k^C=\ga_k^A\otimes\ga_k^B\in \wh C,$$
where $(\ga_k^A,\ga_k^B)$ is a sequence of $\GA_i^A\ti\GA_j^B$ for some $0\leq i\leq n$ and $0\leq j\leq m$, which converges to its limit set 
$L\left(\ol{(\ga^A)}=(\ga_k^A)_k\right)\ti L\left(\ol{(\ga^B)}=(\ga_k^B)_k\right)$ in $S^{A}_{i-1}\times S^{B}_{j-1} \subset \wh A\ti\wh B$.
Since $A $ and $B $ are nuclear, we have that
\begin{eqnarray*}
 \F(C)\res{L(\ga_k^C)}=\F(A)\res{L(\ol{\ga^A})}\ol\ot \F(B)\res{L(\ol{\ga^B})}.
 \end{eqnarray*}

Let $(\si_{\ol{(\ga^A)},k}:\F (A)\res{L(\ol{\ga^A)}}\to B(\H_i))_k,\ (\text{resp.}\  
((\si_{\ol{(\ga^B)},k}:\F (B)\res{L(\ol{\ga^B)}}\to B(\H_j))_k)$ be the 
sequence of uniformly bounded linear mappings that comes  from  the  \textit{NCDL} property for $A$ 
(resp. for $B$). Define then for all $k\in\N$:
$$ \si_{\ol{\ga^C},k}(\phi^A\otimes\phi^B)=\si_{\ol{\ga^A},k}(\phi^A)\otimes\si_{\ol{\ga^B},k}(\phi^B)$$
on elementary tensors. This definition can be extended in a unique way to a bounded selfadjoint linear mapping
\begin{eqnarray*}
 \si_{\ol{\ga^C},k}: \F(C)\res{L(\ol{\ga^C})}\to \B(\H_i\otimes \H_j),k\in\N.
 \end{eqnarray*}
 
Then we see that for all finite sums $c=\sum_l a_l\otimes b_l\in C$: 
\begin{eqnarray*}
 \underset{k\to\iy}{\lim}\noop{\wh c(\ga_k^C)-  
 \si_{\ol{\ga^C},k}({\wh c}\res{L(\ol{\ga^C})}}
 &=&
 \underset{k\to\iy}{\lim}\noop
 {
 \sum_l \wh a_l(\ga_k^{A})\otimes \wh b_l(\ga_k^{B})-
\si_{\ol{\ga^A},k}({\wh a_l}
  \vert_
 {L(\ol{\ga^A})})\otimes \si_{\ol{\ga^B},k}
 (
 {\wh b_l}
 \vert_{L(\ol{\ga^B})
 }
 )
 }
 \\
 \nn  &=
 &
0.
\end{eqnarray*}
Furthermore
\begin{eqnarray*}
 \noop{\si_{\ol{\ga^C},k}(c)}\nn  &= &
 \noop{\sum_ l\si_{\ol{\ga^A},k}({\wh a_l}
  \vert_
 {\ol{\ga^A}})\otimes \si_{\ol{\ga^A},k}
 (
 {\wh b_l}
 \vert_{
 \ol{\ga^B}
 })}\\
 \nn  &\leq &
\sum_ l\noop{\si_{\ol{\ga^A},k}({\wh a_l}
  \vert_
 {\ol{\ga^A}})}\noop{ \si_{\ol{\ga^B},k}
 (
 {\wh b_l}
 \vert_{
 \ol{\ga^B}
 })}\\
 \nn  &\leq &
\be_{\ol{\ga^A}}\be_{\ol{\ga^B}} \sum_l\no{{\wh a_l}
  \vert_
 {\ol{\ga^A}}}_\iy \no{{\wh b_l}
  \vert_
 {\ol{\ga^B}}}_\iy,
 \end{eqnarray*}
 for some constants $\be_{\ol{\ga^A}}>0$ and $\be_{\ol{\ga^B}}>0.$

Hence
\begin{eqnarray*}
 \noop{\si_{\ol{\ga^C},k}(c)}\nn  &\leq &
 \be_{\ol{\ga^A}}\be_{\ol{\ga^B}} \no{\F(c)\res{L(\ol{\ga^C})}}.
 \end{eqnarray*}
 \item  
 Similarly, if $\ol{(\ga^C)}=(\ga_k^C)_k$ is a sequence in $\GA_{i,j}^C$ which admits its limits in $S^{A}_{i-1}\otimes \GA^{B}_j $
then for a properly convergent subsequence we have 
$$\ga_k^C=\ga_k^A\otimes\ga_k^B\in \wh C,$$
where $(\ga_k^A\otimes\ga_k^B)_k$ is a sequence of $\GA_i^A\ti\GA_j^B$ for some $0\leq i\leq n$ and $0\leq j\leq m$ which converges to its limit set 
$L\left(\ol{(\ga^A)}=(\ga_k^A)_k\right)\ti \{\ga^{B}\}$ in $S^{A}_{i-1}\times \GA^{B}_j \subset \wh A\ti\wh B$ for some $\ga^{B} \in \GA^{B}_j$.
  
Let $(\si_{\ol{(\ga^A)}}:CB(S_{i-1}^A)\to B(\H_i))_k,$ be the 
sequence of uniformly bounded linear mappings that comes  from  the  \textit{NCDL} property for $A$. Define then for all $k\in\N$:
$$\ga_k^C: CB(T_{i+j-1}^C)\to B(\H_i\ti \H_j):\ \ga_k^C(\phi^A\otimes\phi^B)=\ga_k^A(\phi^A)\otimes \phi^B(\ga^{B}).$$
Then we see again for all finite sums $c=\sum_l a_l\otimes b_l\in C$ that: 
\begin{eqnarray*}
 \underset{k\to\iy}{\lim}\noop{\wh c(\ga_k^C)-
 \tilde\si_{\ol{\ga^C},k}({\wh c}\res{\GA_{r-1}^C})}
 &=&
 \underset{k\to\iy}{\lim}\noop
 {
 \sum_l \wh a_l(\ga_k^{A})\otimes \wh b_l(\ga_k^{B})-
 \tilde\si_{\ol{\ga^A},k}({\wh a_l}
  \vert_
 {\ol{\ga^A}})\otimes 
{\wh b_l}
 (\ga^{B})
 }\\
 \nn  &=
 &
0.
\end{eqnarray*}
 \end{enumerate}

\end{proof}
\begin{example}\rm
[The $ C^* $-algebra of a direct product of a locally compact group second countable  and a locally compact abelian second countable group]

Let $G$ a second countable  locally compact group and $A$ a locally compact second countable abelian group. 

Let $\tilde G:=G\ti A$ be  the direct product of $G$ and $A$. Then  
 $$\wh{\tilde G}=\wh G\ti\wh A.$$
For $F\in C_c(\tilde G)$ we define the application 
 \begin{eqnarray}\label{defwhFA}
 \wh F^A: \wh A\to L^1(G);\ \wh F^A(\chi)(g):=\int_A F(g,a)\chi(a)da,\ g\in G,\chi\in\wh A.
 \end{eqnarray}
Then  $\wh F^A$   is a continuous mapping which 
extends for all $b\in C^*(\tilde G)$ into a continuous mapping  $$\wh b^F:\wh A\mapsto C^*(G).$$
If now  $C^*(G)$ is \textit{NCDL}, we can write the spectrum of  $C^*(\tilde G)$ the following way:\\
There is an   increasing finite family $S_0\subset S_1\subset\cdots\subset S_d=\wh G$ of  closed subsets of the spectrum
$\wh G$ of $G$ such that  
for all  $i=1,\cdots,d,$ the subset  $\GA_0=S_0$ and  $\GA_i:=S_i\backslash S_{i-1},\ i=1,\cdots,d,$ 
are separated for their relative topology. 
Then the spectrum of  $\tilde G$ is the disjoint union of the subsets  $\tilde S_j:=S_j\ti\wh A,\ j=0,\cdots,d$
and for all  $j=1,\cdots,d,$ the subset  
$\tilde\GA_j:=\tilde S_j\backslash\tilde S_{j-1}$ has Hausdorff relative topology.\\
It is easy to see that $C^*(\tilde G)$ has all the required properties to be \textit{NCDL}. Indeed for $F\in C_c(\tilde G)$ it is immediate to see 
 that the operator fields $\wh F$ defined on   $\wh{\tilde G}$ 
by $\wh F(\pi\ti\chi)=(\pi\ti\chi)(F)=\pi(\wh F^A(\chi)),\ \pi\in\wh G,\ 
\chi\in\wh A,$ are continuous on the different subsets $\tilde\GA_j$ and they go to $0$ at infinity.\\

\end{example}

\section{The $C^*$-algebra of the Heisenberg motion groups $G_n$.}\label{CstarGone}
The structure of the group $C^*$-algebra $C^*(G) $ realized as algebra of operator fields defined over the spectrum $\wh G $ of $G $ is already known for certain classes of Lie groups, such as the Heidelberg and the thread-like Lie groups
(see \cite{Lud-Tur}) and the $ax+b$-like groups (see \cite{Lin-Lud}). Furthermore, the $C^*$-algebras of the $5$-dimensional nilpotent Lie 
groups have been determined in \cite{Lud-Reg1}, while those of all $6$-dimensional nilpotent Lie groups have been characterized in \cite{Reg}
and the $C^*$-algebras of the two-step nilpotent Lie groups have been determined  in \cite{Gun-Lud}. Furthermore it follows from general principles that the $C^* $-algebra of any connected nilpotent Lie group has the \textit{NCDL}-property (see \cite{Be-Be-Lu}).
The description  of the $C^*$-algebra of the motion group $SO(n)\ltimes\R^n$ has been done  in  \cite{Lud-Ell-Abd}. 
\subsection{The Heisenberg motion groups $G_n$.}
We denote by $\text{diag}(\ga_1,\cdots,\ga_n)$ a diagonal matrix in $\text{Mat}(n,\C)$ with numbers $\ga_1,\cdots,\ga_n.$
Let $\HH_n=\C^n\times\R$ denote the $(2n+1)$-dimensional Heisenberg group, with group law 
$$(z,t)(z',t')=\left(z+z',t+t'-\frac{1}{2}\text{Im}(z\cdot\bar{z'})\right),\ z,z'\in\C^n,\ t,t'\in\R,$$
where $\text{Im}(z)$ is the imaginary part of $z$ in $\C^n$ and $z\cdot z':=\overset{n}{\underset{j=1}{\sum}}z_jw_j.$

The group $\T^n$ acts naturally on $\HH_n$ by automorphisms as follows 
$$e^{i\theta}(z,t):=(e^{i\theta}z,t),$$
where $e^{i\theta}=(e^{i\theta_1},\cdots,e^{i\theta_n})\in \T^n$.

Let $G_n$ be the semi-direct product $\T^n\ltimes\HH_n$, 
equipped with the following group law:
 \begin{eqnarray*}
 (e^{i\theta},z,t)\centerdot(e^{i\theta'},z',t'):
 =\left(e^{i(\theta+\theta')},z+e^{i\theta}z',t+t'-\frac{1}{2}\text{Im}(z\cdot\overline{e^{i\theta}z'})\right) ,
 \forall e^{i\theta},e^{i\theta'}\in\T^n\text{ and } (z,t),(z',t')\in\HH_n.
 \end{eqnarray*}
 For $z\in\C^n,$ we introduce the $\R$-linear form $z^*$ on $\C^n$ defined by 
 $$z^*(w):=\text{Im}(z\cdot\overline{w})$$
and we identify the algebraic dual of  the Lie algebra $\t_n $ of $\T^n \subset U(n)$ with $i \R^n$ via the scalar product
\begin{eqnarray*}
 iA\cdot i B=-\sum A_j B_j,\ iA,i B\in\t_n.
 \end{eqnarray*}

 We have a map 
 \begin{eqnarray*}
  \left.\begin{array}{cccc} 
 & & & \\
\times: & \C^n\ti \C^n& \longrightarrow& \t^*_n\\
&(z,w)&\mapsto&z\ti w\end{array}\right.
 \end{eqnarray*}
given by 
$$z\ti w(A):=w^*(Az)=\text{Im}(w\cdot\overline{Az})=\sum_{j=1}^n\text{Re}(w_j\ol{z_j})A_j,\ A=(iA_1,\cdots, iA_n)\in \t_n.$$
It follows easily from the group law in $G_n$ that the coadjoint representation $\text{Ad}^*$ of $G_n$ is given by 
\begin{eqnarray*}
 \text{Ad}^*(e^{i\theta},z,t)(U,u,x)=\left(U+z\ti (e^{i\theta}u)+\frac{x}{2}z\ti z,e^{i\theta}u+xz,x\right),
\end{eqnarray*}
for all $(U,u,x)\in\g_n.$ Therefore the coadjoint orbit of $G_n$ through $(U,u,x)$ is given by 
\begin{eqnarray*}
 \O_{(U,u,x)}&=&\text{Ad}^*(G_n)(U,u,x)\\
 &=&\left\{(U+z\ti (e^{i\theta}u)+\frac{x}{2}z\ti z,e^{i\theta}u+xz,x);\ e^{i\theta}\in\T^n,\ z\in\C^n\right\}.
\end{eqnarray*}

\begin{definition}\label{ir def}
\rm   Let for $0\ne r=(r_1,\cdots, r_n)\in \R^n_+ $ 
\begin{eqnarray*}
 I_r:=\{j\in \{1,\cdots, n\}\vert r_j\ne 0\}
 \end{eqnarray*}
and 
\begin{eqnarray*}
 \Z^n_r:=\{(\la_1,\cdots, \la_n)\in \Z^n, \la_j=0,\  \forall j\in I_r\}.
 \end{eqnarray*}
Let also
\begin{eqnarray*}
 \T^n_r:=\{e^{i\th}\in \T^n\vert \th=(\th_1,\cdots, \th_n), \th_j\in\Z,\ \forall j\in I_r\}.
 \end{eqnarray*}
Then $\T^n_r $ is  a closed connected subgroup $\T^n $ isomorphic to $\T^{n-\val {I_r}} $.
 \end{definition}
Furthermore, the subgroup $\T^n_r $ is the stabilizer group of the linear functional 
$\ell_r=(ir_1,\cdots, ir_n)\in \t_n^* $ and $\Z^n_r $ describes the spectrum of the group $\T^n_r $.

It follows   that  the space $\g_n^\ddag/G_n$ of admissible coadjoint orbits of 
$G_n$ is the union of the set $\GA^2 $ of all orbits
\begin{eqnarray*}
 \O_{(\la,\al)}=\left\{(i(\la_1-\frac{\al}{2}(x_1)^2),
 \cdots,i(\la_n-\frac{\al}{2}(x_n)^2)),\al e^{i\theta}x,\al);\ 
 e^{i\theta}\in\T^n,\ x=(x_1,\cdots,x_n)\in\R_+^n\right\}, 
\end{eqnarray*}
$\text{ where  } \la=(\la_1,\cdots,\la_n)\in \Z^n \text{ and }\al\in\R^*$, of the set $\GA^1 $ of  all orbits 
\begin{eqnarray*}
 \O_{\la,r}=\left\{i(\la+z\ti(e^{i\theta}r)),e^{i\theta}r,0);\ e^{i\theta}\in\T^n,\ z\in\C^n\right\}, r\in \R^n_+,r\ne 0, \la\in \Z^n_r.
\end{eqnarray*}
(this corrects  a mistake in   \cite{Hal-Rah},) and of the set $\GA^0 $ of all  the one point orbits 
\begin{eqnarray*} 
 \O_\la &= &\{\ell_\la=(i\la,0,0)\}, \la\in \Z^n. 
\end{eqnarray*}

In the case of $\GA^1 $, we parametrize its  orbits  by 
$$\GA^1=\{\ell_{\la,r}=(i\la,r,0);\ r\in\R_+^n,r\ne 0, \ \la\in \Z^n_r\}.$$

We can thus write  the space $\g_n^\ddag/G_n$ of admissible coadjoint orbits of $G_n$ as the disjoint  union 
\begin{eqnarray*}
 \g_n^\ddag/G_n&=&\GA^2\cup\GA^1\cup \GA^0\\
 \nn  &= &
(i\Z^n\ti\R^*)\bigcup _{r\in\R^n_+,\ r\ne 0,\ \la\in \Z^n_r}(i\la,r)\bigcup_{\la\in\R} \ell_\la.
 \end{eqnarray*}

 The topology of the space $\hat G_1 $ has been determined in \cite{th-Ell} and   of the space $\hat G_n $   in \cite{Hal-Rah} (at least partially). 
We have the following description of the topology of  $\g_n^\ddag/G_n $.
\begin{theorem}\label{limitset}$ $
\begin{enumerate}
 \item Let $(\O_{(\la^k,\al_k)})_k$ be a sequence of admissible coadjoint orbits of $G_n.$ Then   
 \begin{enumerate}
 \item $(\O_{(\la^k,\al_k)})_k$ converges to 
 $\O_{\la,\al}$ in $\g_n^\ddag/G_n$ if and only if  the sequence $(\al_k)_k$ converge to $\al$ 
 and $\la^k=\la$ for $k $ large enough.
 \item $(\O_{(\la^k,\al_k)})_k$ converges to 
 $\O_{\la,r} $ in $\g_n^\ddag/G_n$ if and only if  the sequence $(\al_k)_k$ converge to zero,
 for all $j\in I_r,\ \al_k\la_j^k$ tends to $\frac{r^2_j}{2}$ and for all $j\not\in I_r $, $\al_k\la_j^k$ 
 tends to $0$ and $\al_k(\la_j^k-\la_j)\geq 0 $ as  $k\to+\iy$.
 
 \end{enumerate}
 \item Let $(\O_{\la_k,r^k})_k$ be a sequence of admissible coadjoint orbits of $G_n$ such that $I_{r_k}=I $ is constant. Then 
 $(\O_{\la_k,r^k})_k$ converges to $\O_{\la,r}\in\GA^1$  if and only 
 if $\underset{k\to\iy}{\lim}r^k=r$ and $\la^k_j=\la_j $ for all $j\not\in I_r$.
\item Let $(\O_{\la_k,r^k})_k$ be a sequence of admissible coadjoint orbits of $G_n$ such that $I_{r_k}=I $ is constant. Then 
 $(\O_{\la_k,r^k})_k$ converges to $\O_{\la}\in\GA_0$  if and only 
 if $\underset{k\to\iy}{\lim}r^k=0$ and $\la^k_j=\la_j $ for all $j\not\in I_r$.

\end{enumerate}
\end{theorem}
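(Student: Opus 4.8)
The proof rests on one soft fact about coadjoint orbit spaces. Since $\g_n^*$ is a finite dimensional real vector space, hence metrizable, and $G_n$ acts on it by homeomorphisms (affinely, via $\mathrm{Ad}^*$), the saturation $G_n\cdot U$ of an open set $U$ is open; feeding this through the definition of the quotient topology and running a diagonal argument over shrinking balls yields the criterion I will use: a sequence of orbits $(\O_k)_k$ converges to an orbit $\O$ in $\g_n^\ddag/G_n$ if and only if there are representatives $\xi_k\in\O_k$ with $\xi_k\to\xi$ in $\g_n^*$ for some --- equivalently, every --- $\xi\in\O$. So for each of the four situations the plan is to translate the convergence $\O_k\to\O$ into the existence of a convergent sequence of representatives, read off the stated numerical conditions, and conversely manufacture such representatives from the numerical conditions using the explicit parametrizations of $\O_{(\la,\al)}$, $\O_{\la,r}$ and $\O_\la$ recalled above.

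The extraction direction uses the continuous $G_n$-invariant functions visible in $\mathrm{Ad}^*(e^{i\theta},z,t)(U,u,x)=(U+z\times(e^{i\theta}u)+\tfrac x2\,z\times z,\ e^{i\theta}u+xz,\ x)$. The last coordinate $x$ is fixed, so $\al_k$ must tend to the last parameter of the limit orbit: this forces $\al_k\to\al\neq0$ in case (1)(a) and $\al_k\to0$ in (1)(b) and (3). On the open set $\{x\neq0\}$ the functions $(iU,v,x)\mapsto U_j+\tfrac1{2x}\val{v_j}^2$ are continuous and $G_n$-invariant and equal $\la_j$ on $\O_{(\la,\al)}$; on the hyperplane $\{x=0\}$ the analogous invariants are the moduli $\val{v_j}$ and, for those $j$ with $v_j\equiv0$ on the orbit, the frozen coordinate $U_j$, which equals $\la_j$ for $j\notin I_r$ on $\O_{\la,r}$. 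Since all these $\la$-parameters are integers, convergence of the invariant instantly upgrades to equality $\la^k_j=\la_j$ for large $k$; and convergence of $\val{v_j}$ gives $r^k\to r$, resp. $r^k\to0$, in (2) and (3). Finally the identity $\la_j^k-U_j^k=\tfrac{\al_k}2\,x_j^2$ on $\O_{(\la^k,\al_k)}$ exhibits $\la_j^k-U_j^k$ as a quantity with the sign of $\al_k$; together with $U_j^k\to\la_j$ and integrality this produces the one-sidedness $\al_k(\la_j^k-\la_j)\ge0$ of (1)(b), while $\al_k\la_j^k=\al_k U_j^k+\tfrac12\val{v_j^k}^2$ produces the limits $\al_k\la_j^k\to\tfrac12 r_j^2$ for $j\in I_r$ and $\al_k\la_j^k\to0$ for $j\notin I_r$.

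For the converse directions I would exhibit the representatives. In (1)(a), $\xi_k=\big((i(\la_j-\tfrac{\al_k}2 x_j^2))_j,\ \al_k e^{i\theta}x,\ \al_k\big)\in\O_{(\la,\al_k)}$ converges to the chosen point of $\O_{\la,\al}$. In (1)(b), given the numerical conditions, a point of $\O_{(\la^k,\al_k)}$ is obtained by taking, for $j\in I_r$, the parameter $x_j^k=\tfrac1{\val{\al_k}}\sqrt{2\al_k\la_j^k}$ (legitimate since $\al_k\la_j^k>0$ eventually) with the torus parameter chosen so that $\al_k e^{i\theta_j}=\val{\al_k}$: this makes the $j$-th first coordinate vanish while the $j$-th second coordinate $\val{\al_k}x_j^k=\sqrt{2\al_k\la_j^k}\to r_j$; and, for $j\notin I_r$, the parameter $x_j^k=\tfrac1{\val{\al_k}}\sqrt{2\al_k(\la_j^k-\la_j)}$ (legitimate by the sign hypothesis), which freezes the $j$-th first coordinate at $\la_j$ while the $j$-th second coordinate tends to $0$; the resulting $\xi_k$ converge to $(i\la,r,0)\in\O_{\la,r}$ (using $\la_j=0$ for $j\in I_r$). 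Cases (2) and (3) are then immediate: once $r^k\to r$ (resp. $r^k\to0$) and $\la^k_j=\la_j$ for $j\notin I_r$ hold, the trivial representatives $(i\la^k,r^k,0)\in\O_{\la^k,r^k}$ already converge to $(i\la,r,0)$ (resp. to $(i\la,0,0)$); here one also notes that $\la^k\in\Z^n_{r^k}$ forces $\la_j=0$ whenever $j$ lies in the support of some $r^k$ but not of $r$, consistently with the statement.

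The only genuinely delicate point is case (1)(b): one must reconcile simultaneously the vanishing of $\al_k$, the prescribed rate $\al_k\la_j^k\to r_j^2/2$, and the one-sidedness $\al_k(\la_j^k-\la_j)\ge0$, against the integrality of the $\la_j^k$ and the nonnegativity of the squares $x_j^2$, and one must choose the representatives $x_j^k$ (of order $1/\sqrt{\val{\al_k}}$) just right so that the first and the second coordinates of $\xi_k$ converge at the same time. Making these choices mutually consistent, and checking that the limit point lies \emph{on} $\O_{\la,r}$ rather than merely near it, is the crux; the rest is bookkeeping.
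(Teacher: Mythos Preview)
Your argument is correct and, where the paper actually works things out, coincides with it: the only case the paper proves in detail is (1)(b) with $\emptyset\subsetneq I_r\subsetneq\{1,\dots,n\}$, and there it does exactly what you do --- read off $\al_k\to0$, $\al_k\la_j^k\to r_j^2/2$, and $\al_k(\la_j^k-\la_j)\ge0$ from convergent representatives, and for the converse set $x_j^k=\sqrt{2\la_j^k/\al_k}$ for $j\in I_r$ and $x_j^k=\sqrt{2(\la_j^k-\la_j)/\al_k}$ for $j\notin I_r$.

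The difference is one of packaging rather than strategy. The paper disposes of (1)(a), the extreme cases $I_r=\{1,\dots,n\}$ and $I_r=\emptyset$ of (1)(b), and all of (2)--(3) by citing Theorems~3--6 of Ben~Halima--Rahali and the paper of Elloumi--Ludwig, whereas you give a uniform self-contained treatment via the invariant functions $x$, $U_j+\tfrac1{2x}\val{v_j}^2$ on $\{x\ne0\}$, and $\val{v_j}$ and the frozen $U_j$ on $\{x=0\}$. Your route makes the geometry transparent and explains \emph{why} integrality upgrades convergence to eventual equality and why the one-sided condition $\al_k(\la_j^k-\la_j)\ge0$ appears (it is the shadow of $x_j^2\ge0$); the paper's route is shorter but opaque without the cited references. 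Both ultimately rest on the same soft fact you isolate at the start, that orbit convergence is equivalent to convergence of representatives.
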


  \begin{proof}$ $
 \begin{enumerate}
  \item 
  \begin{enumerate}
\item See Theorem $5$ in \cite{Hal-Rah}.
\item
  \begin{enumerate}
   \item If $I_r=\{1,\cdots,n\}$, see  Theorem $4$ in \cite{Hal-Rah}.
   \item If $I_r=\{\varnothing\}$, see Theorem $3$ in \cite{Hal-Rah}.
   \item If $I_r\subsetneq\{1,\cdots,n\}$, assume that $(\O_{\la^k,\al_k})_k$ converges to $\O_{\la,r}.$ Then there exist two 
   sequences $(e^{i\th^k})_k\subset\T^n$ and $(x^k)_k\subset\R_+^n$ such that 
   \begin{eqnarray}
    \left.\begin{array}{cccc}
 \al_k & \longrightarrow&0& \\
\al_ke^{i\th_j^k}x^k_j& \longrightarrow& r_j,&\forall\ j\in I_r\\
\al_ke^{i\th_j^k}x^k_j& \longrightarrow& 0,& \forall\ j\notin I_r\\
\la_j^k-\frac{\al_k}{2}(x^k_j)^2& \longrightarrow& 0,& \forall\ j\in I_r\\
\la_j^k-\frac{\al_k}{2}(x^k_j)^2& \longrightarrow& \la_j,& \forall\ j\notin I_r
\end{array}\right.
   \end{eqnarray}
We have $|\al_k|x_j^k\longrightarrow r_j$ for all $j\in I_r$ and $|\al_k|x_j^k\longrightarrow 0$ for all 
$j\notin I_r$. Since $\al_k(\la_j^k-\frac{\al_k}{2}(x^k_j)^2)\longrightarrow0$ for all $j\in\{1,\cdots,n\},$ we immediately
see that the sequence $(\al_k\la_j^k)_k$ tends to $\frac{r_j^2}{2}$ for all $j\in I_r$ 
and $(\al_k\la_j^k)_k$ tends to zero for all $j\notin I_r$. We also  have 
$\al_k(\la_j^k-\la_j)\geq0$ for large $k$ for all  $j\notin I_r.$ 

Conversely, let us assume that $(\al_k)_k$ converges to zero, for all $j\in I_r$, $(\al_k\la_j^k)_k$ 
converges to $\frac{r_j^2}{2}$ and for all $j\notin I_r,\ (\al_k\la_j^k)_k$ converges to zero and $\al_k(\la_j^k-\la_j)\geq0$. 
Then for $k$ large enough we can define for all $j\in I_r$ the sequence $x_j^k=\sqrt{\frac{2\la_j^k}{\al_k}}.$ We  see that 
for all $j\in I_r$ $\al_ke^{i\th_j^k}x_j\longrightarrow r_j.$ For all $j\notin I_r$ we have assumed that the sequence 
$(\al_k\la_j^k)_j$ converges to zero and $\al_k(\la_j^k-\la_j)\geq0$ for large $k$. Take for all $j\notin I_r$ 
$x_j^k=\sqrt{\frac{2}{\al_k}(\la_j^k-\la_j)}$.

This  the sequence $(\O_{(\la^k,\al_k)})_k$ converges to $\O_{\la,r}$ in $\g_n^\ddag/G_n.$
   \end{enumerate}
\end{enumerate}
\item The orbits $\O_{\la_k,r_k} $ are direct products of orbits of $\T\ltimes \HH_1 $. It suffices to apply  \cite{E.L.} or Theorem $6$ in \cite{Hal-Rah}.

\end{enumerate}

 \end{proof}

 As a consequence we obtain (see \cite{E.L.}, \cite{Lud-Ell-Abd} and  \cite{Hal-Rah})
\begin{theorem}\label{homeomorphism dual orbit space}
 The unitary dual $\widehat G_n$ is homeomorphic to the space of admissible coadjoint orbits $\g_n^\ddag/G_n.$  
\end{theorem}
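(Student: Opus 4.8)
The plan is to obtain the homeomorphism in two stages. First I would produce a set-theoretic bijection $\widehat{G_n}\to\g_n^\ddag/G_n$ by running Mackey's little-group machine for the regular semidirect product $G_n=\T^n\ltimes\HH_n$; then I would identify the Fell topology on $\widehat{G_n}$ with the orbit-space topology by matching convergent sequences, using the explicit description of the latter provided by Theorem \ref{limitset}.

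For the bijection: since $\HH_n$ is type I and $\T^n$ is compact, $G_n$ is type I, and every $\pi\in\widehat{G_n}$ is obtained by choosing a $\T^n$-orbit in $\widehat{\HH_n}$, a representative $\rho$, an irreducible projective representation of the stabilizer $(\T^n)_\rho$ with the Mackey cocycle attached to $\rho$, and inducing from $(\T^n)_\rho\ltimes\HH_n$. Now $\widehat{\HH_n}$ is the disjoint union of the characters $\chi_w$, $w\in\C^n$, on which $e^{i\th}$ acts by $\chi_w\mapsto\chi_{e^{i\th}w}$, and the Schrödinger representations $\pi_\al$, $\al\in\R^*$, each fixed by all of $\T^n$ (since $e^{i\th}$ is trivial on $Z(\HH_n)$ and symplectic on $\C^n$). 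This yields three families. (i) Over $\pi_\al$: the representations of $G_n$ lying above it are parametrized, via Mackey, by the irreducible representations of $\T^n$ twisted by the (metaplectic-type) Mackey cocycle of $\pi_\al$, which form a $\Z^n$-family (the metaplectic extension of $\pi_\al$ twisted by the characters of $\T^n$, up to a fixed half-integral shift); I match this with $\GA^2=i\Z^n\ti\R^*$, the central character fixing $\al$ and the $\T^n$-weight fixing $\la$. (ii) Over the orbit $\T^n\cdot\chi_r$ with representative $r\in\R_+^n\setminus\{0\}$, whose stabilizer is $\T^n_r\simeq\T^{n-\val{I_r}}$ with dual $\Z^n_r$: inducing $\chi_r$ extended by $\la\in\Z^n_r$ gives a representation matched with $\O_{\la,r}\in\GA^1$. (iii) Over $\chi_0$, fixed by all of $\T^n$: the one-dimensional representations $(e^{i\th},z,t)\mapsto e^{i\sp{\la}{\th}}$, $\la\in\Z^n$, matched with $\O_\la\in\GA^0$. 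This reproduces the bijection recorded in \cite{E.L.} and \cite{Hal-Rah}, and its restriction to each piece is manifestly continuous in both directions.

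For the topologies: both $\widehat{G_n}$ (Fell topology) and $\g_n^\ddag/G_n$ are second countable, hence sequential, so it suffices to check that the bijection and its inverse carry convergent sequences to convergent sequences with the same limit sets. Fell convergence of $(\pi_k)_k$ with limit set $L$ means convergence of the associated positive-definite functions, which for the families above reduces, on restriction to $\HH_n$ and to $\T^n$, to the asymptotic behaviour of the parameters $\al$, $r$, $\la$. The substantive cases are the degenerations in Theorem \ref{limitset}: a sequence $\pi_{\la^k,\al_k}$ with $\al_k\to 0$, with $\al_k\la^k_j\to r_j^2/2$ for $j\in I_r$, and with $\al_k\la^k_j\to 0$, $\al_k(\la^k_j-\la_j)\ge 0$ for $j\notin I_r$, should converge in $\widehat{G_n}$ exactly to the representation attached to $\O_{\la,r}$ (and to the character attached to $\O_\la$ when $r=0$), while sequences inside $\GA^1$ with constant $I_{r^k}=I$ behave as in Theorem \ref{limitset}(2)--(3). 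To see this one constructs, for each functional in the limit orbit, unit vectors $\xi_k\in L^2(\R^n)$ built from rescaled Hermite functions so that $\sp{\pi_{\la^k,\al_k}(\cdot)\xi_k}{\xi_k}$ converges to a matrix coefficient of the induced representation, and then uses a $\T^n$-averaging argument to recover the full induced representation; conversely, Fell convergence of a sequence of $\pi_{\la^k,\al_k}$ forces, through its $\HH_n$- and $\T^n$-spectra, precisely the parameter constraints of Theorem \ref{limitset}, so that no extra limit points occur. The continuity of the orbit map itself is then as in \cite{Lud-Ell-Abd} and \cite{Hal-Rah}. Together these give the homeomorphism.

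The main obstacle is this last step: pinning down the exact Fell limits of the oscillator representations $\pi_{\la^k,\al_k}$ as $\al_k\to 0$ and showing, stratum by stratum, that they coincide with the coadjoint-orbit limits computed in Theorem \ref{limitset}. This is a genuine asymptotic computation with the explicit Schrödinger and induced models and with the $\T^n$-action, not a formal deduction, and it is exactly where one invokes (or re-proves) \cite{E.L.}, \cite{Lud-Ell-Abd} and \cite{Hal-Rah}. For $n=1$ the relevant degeneration is carried out in detail later in the paper (Theorem \ref{estimates of sequences rne 0}), and the general case follows from the direct-product decomposition of the orbits $\O_{\la,r}$ already exploited in the proof of Theorem \ref{limitset}.
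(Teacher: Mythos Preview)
The paper does not actually prove this theorem: it records the statement as a consequence of Theorem~\ref{limitset} and defers entirely to \cite{E.L.}, \cite{Lud-Ell-Abd} and \cite{Hal-Rah}. Your sketch is exactly the strategy those references carry out---Mackey's little-group method for the bijection, then a sequential comparison of the Fell and quotient topologies via explicit matrix-coefficient asymptotics---so there is no divergence in approach, only in how much you spell out.

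Two small remarks. First, the Mackey cocycle for $\T^n$ acting on the Fock model of $\pi_\al$ in fact trivializes here (the extension splits over $\T^n\subset U(n)$), so the parameters lie in $\Z^n$ with no half-integral shift; your hedge is unnecessary. Second, invoking Theorem~\ref{estimates of sequences rne 0} for the $n=1$ Fell degeneration is harmless logically but more than you need: that theorem gives operator-norm control, whereas for the Fell topology the weak convergence of matrix coefficients computed in \cite{Hal-Rah} (and in the material after the end of the main text of the paper) suffices.
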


\subsection{The Fourier Transform.}
According to Theorem \ref{homeomorphism dual orbit space} the spectrum $\hat G_n $ is determined by the space of admissible coadjoint orbits. Hence we have irreducible representations of the form $\pi_{\la,\al},(\al\in\R^*, \la\in \Z^n $), $\pi_{\la,r}, (r\in \R_{>0},\la\in \Z^n_r $) and characters $\ch_\la,\la\in \Z^n $.

\subsubsection{The generic representations $\pi_{\la,\al} $}\label{generic} They are extensions of the infinite dimensional irreducible representations $\pi_\al,\al\in\R^*, $ to $G_n $.

The irreducible representation $\pi_\al  $ of $\HH_n $ acting on the space $\F_\al(n) $ for $\al>0 $ is given by Folland in \cite{Fo}.  
  Let for $u,v\in \C^n $
\begin{eqnarray*}
uv=u\cdot v:= (\sum_{j=1}^n u_j v_j),\  u=(u_1,\cdots, u_n),\ v=(v_1,\cdots,  v_n).
 \end{eqnarray*}

For $\al>0 $, the  Hilbert space of $  \pi_\al $ is the space 
\begin{eqnarray*} 
 \F_\al(n)  &:= &\left\{f:\C^n\to \C; f\text{ holomorphic }, \no f_\al^2=\al\int_{\C^n}\val{f(w)}^2e^{-\frac{\al}{2} \val w^2}dw<\iy\right\}.  
\end{eqnarray*}
by, taking for  the character $ \ch_\al$  the expression
\begin{eqnarray*}
 \ch_\al(t)=e^{i\al t},\ t\in\R. 
 \end{eqnarray*}
We have 
\begin{eqnarray*} 
 \pi_\al(z,t)\xi(w) &= &e^{i\al t}e^{-\frac{\al}{4}\vert  z\vert ^2}e^{-\frac{\al}{2} w\ol z}\xi(w+z). 
\end{eqnarray*}
On the other hand, if $\al < 0 $, the Fock space $\F_\al(n) $ consists of antiholomorphic functions $f:\C^n\to \C $ such that
\begin{eqnarray*} 
 \no f_\al^2 &:= &\val \al\int_{\C^n}\val{f(w)}^2e^{-\frac{\al}{2}\val{w}^2}dw <\iy.
\end{eqnarray*}
The representation $\pi_\al $ takes the form
\begin{eqnarray*} 
 \pi_\al(z,t)f(w) &= & e^{i \al t}e^{-\frac{\al}{4}\vert  z\vert ^2}e^{-\frac{\al}{2} \ol w z}\xi(w+\ol z). 
\end{eqnarray*}
Therefore the representation $\pi_{\la,\al}  $ acts for $\al>0  $ on $\F_\al(n)
$ by
\begin{eqnarray*} 
 \pi_{\la,\al}(e^{i\th},z,t)f(w) &= &e^{i\la \th}e^{i \al t}e^{-\frac{\al}{4}\vert  z\vert ^2}e^{-\frac{\al}{2} w\ol z}\xi(e^{-i\th}w+e^{-i\th}z),
\end{eqnarray*}
and for $\al<0 $:
\begin{eqnarray*} 
 \pi_{\la,\al}(e^{i\th},z,t)f(w) &= &e^{i\la \th}e^{i \al t}e^{-\frac{\al}{4}\vert  z\vert ^2}e^{-\frac{\al}{2} \ol w z}\xi(e^{i\th}w+e^{i\th}\ol z). 
\end{eqnarray*}
For $F\in L^1(G_n) $ , $f\in \F_{\al}(n), \al>0 $ we then  have that
\begin{eqnarray}\label{pilaal F}
 \pi_{\la,\al}(F)f(w)&=&
\int_{\T^n}\int_{\HH_n}e^{i\la \th}e^{i \al t}e^{-\frac{\al}{2}\vert  z\vert ^2}e^{-\al /2 w\ol z}F((e^{i\th},z,t))f(e^{-i\th}w+e^{-i\th}z) dzd\th dt\\
\nn  &= &
\int_{\T^n}\int_{\C^n}e^{i\la \th}e^{-\frac{\al}{4}\vert  z-w\vert ^2}e^{-\al/2 w\ol{ (z-w)}}\widehat F^3{(e^{i\th},z-w,\al)}f((e^{-i\th }z) dzd\th\\
\nn  &= &
\nn\int_{\T^n}\int_{\C^n}e^{i\la \th}e^{-\frac{\al}{4}\vert  z\vert ^{2} +
\frac{\al}{4}\vert w\vert ^2-\frac{i\al}{2} \Im( w\ol z)}\widehat F^3{(e^{i\th},z-w,\al)}f((e^{-i\th }z) dzd\th.
 \end{eqnarray}

\subsubsection{The infinite dimensional representations $\pi_{\la,r} $ (which are trivial on the centre of $G_n $).}\label{pilar}
Let $\ch_{\la,r}(0\ne r=(r_1,\cdots, r_n)\in \R_+^n ,\la\in\Z^n_r) $ be the unitary character 
\begin{eqnarray*}
 \ch_{\la,r}((e^{i\th},z,t)):=e^{i\th\cdot \la }e^{i \text{Re} (\ol z\cdot r)},e^{i\th}\in\T^n_r, z\in\C^n, t\in\R.
 \end{eqnarray*}

The representation $\pi_{\la,r}=\ind {\T^n_r\times\HH_n}{G_n}{\ch_{\la,r} }$ acts then on the space 
\begin{eqnarray*}
 \H_{\la,r}:=L^2(\T^n/\T^n_r,\ch_{\la,r}) 
 \end{eqnarray*} 
by
\begin{eqnarray*}
 \pi_{\la,r}(e^{i\th},z,t)\xi(e^{i\mu})=e^{i\th\cdot \la}e^{-i \text{Re}(e^{i(\th-\mu)} \ol z\cdot r)}\xi(e^{i(\mu-\th)}).
 \end{eqnarray*}

Hence, for $F\in L^1(G_n),\ \xi\in L^2(\T^n/\T^n_r,\la),\  e^{i\mu}\in \T^n/\T^n_r $: 
\begin{eqnarray*}
 \pi_{\la,r}(F)\xi(e^{i\mu})\nn  &= &
\int_{\T^n}\int_{\C^n}e^{i\th\cdot \la}e^{-i \text{Re}(e^{i\mu} \ol z\cdot r)}\widehat F^3(e^{i\th},z,0)\xi((e^{i(\mu-\th)})d\th dz\\
\nn  &= &
\int_{\T^n/\T^n_r}\widehat F^{1,2,3}(e^{i(\mu-\th)},\la,e^{i\mu}\cdot \ell_r,0)\xi(e^{i\th })d\th,
 \end{eqnarray*}
where 
\begin{eqnarray*}
 \widehat F^{1,2,3}(e^{i\th},\la,r,0):=\int_{\T^n_r}e^{i\la \va} e^{-i\Re(r\cdot \ol z)}F(e^{i(\th+\va)},z,t)d\va dzdt.
 \end{eqnarray*} 
 
{
\subsubsection{The characters.}\label{charac}

Let for $ \la\in\Z^n$ 
\begin{eqnarray*}
 \ch_\la(e^{i\th},z,t):=e^{i\la\cdot \th}, \th\in\R^n,z\in\C^n,\ t\in\R.
 \end{eqnarray*}
Then the set $ \{\ch_\la,\ \la\in\Z\}$  is the collection of all unitary characters of the group $ G_n$. 

In particular for the group $G_1 $ we obtain the partition of $\wh G_1 $ into  three Hausdorff subsets
\begin{eqnarray*}
 \wh G_1\nn  &= &\GA_2:=\{\pi_{\la,\al}\vert \la\in\Z,\al\in\R^*\}\cup
\GA_1:=\{\pi_{\la,r}\vert r>0,\la\in\Z\}\cup
\GA_0:=
\{\ch_\la\vert \la\in\Z\}.
 \end{eqnarray*}
  \section{The \textit{NCDL} property for $C^*(G_1) $. }\label{ncdl of Gone}

\subsection{Some definitions}

\begin{enumerate}\label{basis and bessel}
\item 
Let 
\begin{eqnarray*} 
 b_{N,\al}(z) &:= &\sqrt{\frac{\al^{N}}{2^N N!}}z^N\\
 \nn  &= &
b_{N,1}(\sqrt \al z), z\in \C, 
\end{eqnarray*}
be the $N $'th orthonormal  vector of the canonical Hilbert basis of $\F_\al(1) $.
\item  
We define for $ N\in\N, M\in \R_+, N\geq M$  the orthogonal projection $ P_N$ of $ \l2\T$  by
\begin{eqnarray*}
 P_N\left(\sum_{j\in\Z}c_j \ch_j\right):=\sum_{j\geq-N}c_j \ch_j.
 \end{eqnarray*}
 and
 \begin{eqnarray*}
 P_{N,M}\left(\sum_{j\in\Z}c_j \ch_j\right):=\sum_{\underset{j\geq-N}{\val j\leq M} }c_j \ch_j.
 \end{eqnarray*}
Let also 
 \begin{eqnarray*}
 \l2\T_{N}:=P_{N}(\l2\T)=\left\{\sum_{{j\geq-N} }c_j\ch_j;\ \sum_{j\in\Z}\val{c_j}^2<\iy\right\},\\
 \l2\T_{N,M}:=P_{N,M}(\l2\T)=\left\{\sum_{\underset{j\geq-N}{\val j\leq M} }c_j\ch_j;\ \sum_{j\in\Z}\val{c_j}^2<\iy\right\}.
 \end{eqnarray*}

Define for $ \et=\underset{j\in\Z}{\sum} c_j \ch_j\in\l2\T$ 
the element
\begin{eqnarray*}
 V_k(\et):=\sum_{j=-\la_k}^\iy i^{j}c_j b_{j+\la_k,\al_k}
 \end{eqnarray*}
of the Fock space $ \F_\al(1)$. We see that the mapping $ \et\to V_k(\et)$   from  the space
\begin{eqnarray*}
 \l2 \T_{\la_k}:=\left\{\sum_{j=-\la_k}^\iy c_j \ch_j;\ \sum_{j}\val{c_j}^2<\iy \right\}
 \end{eqnarray*}
onto the Fock space $ \F_{\al_k}(1)$ is linear, isometric and surjective. We have that
\begin{eqnarray*}
 V_k^*(f)=\sum_{j=0}^{\iy} (-i)^{j-\la_k}\ga_j \ch_{j-\la_k};\ f_k=\sum_{j=0}^{\iy}\ga_j b_{j,\al_k}. 
 \end{eqnarray*}
Then
\begin{eqnarray}\label{identity Vk}
 V_k\circ V_k^*=\Id_{\F_{\al_k}(1)}, k\in\N.
 \end{eqnarray}
\item  We shall make an essential use of Bessel functions in the proof Theorem \ref{estimates of sequences rne 0} (see \cite{watson} for the definition and properties of Bessel functions).
\begin{definition}\label{bessel}
\rm   Let for $ n\in\Z$  and $ z\in \C$ 
\begin{eqnarray*}
 J_n(z):=(\frac{z}{2})^n\sum_{k+n\geq 0,k\geq 0}^{\iy}(-1)^k\frac{(\frac{1}{4}z^2)^k}{k!(k+n)!}.
 \end{eqnarray*}
Then,
\begin{eqnarray*}
 J_n(z)=\frac{i^{-n}}{\pi}\int_{0}^{\pi}e^{iz\cos(\th)}\cos(n\th)d\th, z\in \C,n\in\Z.
 \end{eqnarray*}
Write for $ z\in \C$, 
\begin{eqnarray*}
  z=e^{i\nu}\val z.
 \end{eqnarray*}
Then for $ v\in\C$, 
\begin{eqnarray*}
 J_n(-\val v)
 \nn  &= &
\frac{i^{-n}}{\pi}\int_{0}^{\pi}e^{-i\vert v\vert \cos(\th)}\frac{e^{in\th}+e^{-in\th}}{2}d\th\\
\nn  &= &
\frac{i^{-n}}{2\pi}\int_{-\pi}^{\pi}e^{-i\Re(\vert v\vert e^{i\th)}}e^{-in\th}d\th.
 \end{eqnarray*}
 \end{definition} 
 \end{enumerate}

 \subsection{Convergence to $\pi_{r} $}  
 \begin{lemma}\label{pik et de}
 For $ F\in\l1{G_1}$ and $ l, j\in\Z$  we have that 
 \begin{eqnarray*} 
 \nn  & &
\langle\pi_{\la_k,\al_k}(F)V_k(\ch_j),V_k(\ch_l)\rangle
 \nn \\
\nn  &=&
\sum_{\overset{l-j+q\geq 0}{0\leq q\leq j+\la_k}}
(-1)^{q+l-j} i^{l-j}\frac{{\sqrt 
{
{(l+\la_k)!}
{(j+\la_k)!}
}}}{q!(j+\la_k-q)!}\frac{(\la_k\al_k/2 )^{q+(l-j)/2}}{\la_k^{q+(l-j)/2}(q+l-j)!}\\
\nn  & &
 \int_{\C}(\ol z)^{l-j} \vert  z\vert ^{2q}e^{-\frac{\al_k}{4}\vert  z\vert ^2}\hat F^{1,3}((-j,z,\al_k))dz\\ 
 \nn  &\underset{(\text{if }l\geq j)}{=} &
\sum_{\overset{l-j+q\geq 0}{0\leq q\leq j+\la_k}}
(-1)^{q+l-j} i^{l-j}\frac{{\sqrt {(\la_k+j+1)\cdots (\la_k+l) }(j+\la_k-q+1 )\cdots (j+\la_k))
}}{\la_k^{q+(l-j)/2}(q!)^2}\frac{(\la_k\al_k/2 )^{q+(l-j)/2}}{(q+1)\cdots(q+l-j)}\\
\nn  & &
\int_{\C}(\frac{\ol z}{\val z})^{(l-j)} \vert  z\vert ^{2q+l-j}e^{-\frac{\al_k}{4}\vert  z\vert ^2}\widehat F^{1,3}((-j,z,\al_k))dz
\end{eqnarray*}
and for $l,j\in\Z $
\begin{eqnarray*} 
  \langle \pi_{r}(F)(\ch_j),\ch_l\rangle 
  \nn  &= &
\sum_{\overset{l-j+q\geq 0}{0\leq q\leq +\iy}}  \int_{\C}
(-1)^{q+l-j}(\frac{\ol z}{\val z})^{(l-j)} i^{j-l} \frac{1}{(q!)^2}
\frac{(r^2\vert  z\vert ^{2}/4  )^{q+\frac{(l-j)}{2}}}{(q+1)\cdots(q+l-j)}
 \widehat F^{1,3}((-j,z,0))dz.
\end{eqnarray*}

 \end{lemma}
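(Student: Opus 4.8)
The plan is to obtain both formulas by a direct computation: expand the Fourier transform integrals defining $\pi_{\la_k,\al_k}(F)$ and $\pi_r(F)$ against the explicit orthonormal bases, and then reorganize the resulting double series so that the Bessel-type structure becomes visible. First I would start from the integral expression \eqref{pilaal F} for $\pi_{\la,\al}(F)f(w)$, substitute $f=V_k(\ch_j)$, and pair the result with $V_k(\ch_l)$ using the definition $V_k(\ch_j)=i^{j}b_{j+\la_k,\al_k}$ (so that we are really computing a matrix coefficient $\langle \pi_{\la_k,\al_k}(F)b_{j+\la_k,\al_k},b_{l+\la_k,\al_k}\rangle$ up to the phase $i^{l-j}$). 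The monomials $b_{N,\al}(z)=\sqrt{\al^N/(2^NN!)}\,z^N$ are monomials, so the Gaussian integral over $\C^n$ reduces to elementary Gaussian moment integrals once one expands $e^{-\frac{\al}{2}w\ol z}$ (or its analogue) into its power series in $w\ol z$.

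Next I would carry out the holomorphy/antiholomorphy bookkeeping. Writing $\xi(e^{-i\th}z)$ and integrating the $\th$-variable over $\T$ produces the partial Fourier coefficient in the torus direction, which is exactly what converts $\hat F^{3}$ into $\hat F^{1,3}$; the constraint $0\le q\le j+\la_k$ appears because the holomorphic function $b_{j+\la_k,\al_k}$ is a degree-$(j+\la_k)$ monomial, so after expanding the shift $z\mapsto z+w$ (or rather pulling the $w$-monomial out of $\xi(w+z)$) only finitely many terms survive. The factor $(\la_k\al_k/2)^{q+(l-j)/2}/\la_k^{q+(l-j)/2}$ is then just a cosmetic rewriting designed so that, in the limit $\la_k\to\iy$, $\al_k\to 0$ with $\la_k\al_k\to r^2/2$, the coefficient converges term-by-term to the coefficient in the $\pi_r$-formula. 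The second displayed equality (valid for $l\ge j$) is obtained from the first purely by rewriting the factorials: $\sqrt{(l+\la_k)!/(j+\la_k)!}=\sqrt{(\la_k+j+1)\cdots(\la_k+l)}$ and absorbing $(q+l-j)!$ appropriately, together with splitting $(j+\la_k)!/(q!(j+\la_k-q)!)$ into $(j+\la_k-q+1)\cdots(j+\la_k)/q!$; and $(\ol z)^{l-j}|z|^{2q}=(\ol z/|z|)^{l-j}|z|^{2q+l-j}$.

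For the $\pi_r$ side I would repeat the computation starting from the formula in \S\ref{pilar} for $\pi_{\la,r}(F)\xi(e^{i\mu})$, pairing with the characters $\ch_j,\ch_l\in L^2(\T/\T_r)$. Here the relevant expansion is that of $e^{-i\Re(e^{i(\th-\mu)}\ol z\cdot r)}$ into its Taylor series; each monomial $(\ol z\cdot r)^p$ produces, after the $\mu$- and $\th$-integrations, exactly the $q$-indexed terms with $r^2|z|^2/4$ in place of $\la_k\al_k|z|^2/4$, and since $\T_r$ is now one point the truncation $q\le j+\la_k$ disappears, giving the infinite sum $0\le q\le+\iy$. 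This is essentially the "$\la_k=\iy$" limit of the first computation. The main obstacle I anticipate is not conceptual but organizational: correctly tracking all the phase factors ($i^{l-j}$ versus $i^{j-l}$, the signs $(-1)^{q+l-j}$), the normalizing square roots coming from $b_{N,\al}$, and the interplay between the holomorphic case $\al>0$ and the antiholomorphic case $\al<0$ — one must check that the final formula is uniform in the sign of $\al_k$. A careful change of variables $z\mapsto z-w$ (as already indicated in \eqref{pilaal F}) and a disciplined use of the Gaussian moment identity $\al\int_\C z^a\ol z^{\,b}e^{-\frac{\al}{2}|z|^2}dz=\delta_{ab}\,b!\,(2/\al)^b$ should keep the computation under control.
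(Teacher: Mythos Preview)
Your proposal is correct and, for the $\pi_{\la_k,\al_k}$ matrix coefficient, matches the paper almost step for step: insert $V_k(\ch_j)=i^{j}b_{j+\la_k,\al_k}$ and $V_k(\ch_l)=i^{l}b_{l+\la_k,\al_k}$ into the Fock integral, expand $e^{-\frac{\al_k}{2}w\ol z}=\sum_m\frac{(-\al_k\ol z/2)^m}{m!}w^m$ and $(w+z)^{j+\la_k}=\sum_q\binom{j+\la_k}{q}w^{q}z^{j+\la_k-q}$, evaluate the $w$-integral by Fock orthogonality (forcing $m+q=l+\la_k$), and finally reindex $q\to j+\la_k-q$; the second displayed formula is indeed just the factorial rewriting you describe. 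The paper does \emph{not} perform the change of variables $z\mapsto z-w$ here (that was only in \eqref{pilaal F}); it works directly with the binomial expansion, but this is a cosmetic difference.

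For the $\pi_r$-coefficient the paper runs the computation in the opposite direction from what you sketch. Rather than starting from $\langle\pi_r(F)\ch_j,\ch_l\rangle$ and expanding $e^{-i\Re(e^{i\ph}\ol z r)}$ into a Taylor series, the paper starts from the series on the right-hand side, recognizes the $q$-sum as the power series of the Bessel function $J_{l-j}(-r|z|)$, substitutes the integral representation $J_n(x)=\frac{i^{-n}}{2\pi}\int_{-\pi}^{\pi}e^{-i\Re(|x|e^{i\th})}e^{-in\th}d\th$, and after an angular shift recovers the induced-representation matrix coefficient. Your forward route is the same identity (essentially Jacobi--Anger) read the other way; either direction works, and the bookkeeping of phases you flag as the main hazard is indeed the only delicate point.
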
 
\begin{proof} 
For $F\in L^1(G_1)$, such that $\widehat F^3\in \CC{\T\times \C\times \R} $ and $l,j\in\Z, l+\la_k\geq 0, j+\la_k\geq 0, $  we have that
 \begin{eqnarray*}\label{pila al et de} 
 & &
 \langle\pi_{\la_k,\al_k}(F)V_k(\ch_j),V_k(\ch_l)\rangle\\
 \nn  &= &
\al_k\int_{\C}\int_\T\int_\C \int_\R e^{i\la_k \th}e^{i \al_k t}e^{-\frac{\al_k}{4}\vert  z\vert ^2}e^{-\al_k/2  w\ol z}\\
\nn  & &
F((e^{i\th},z,t))V_k(\ch_j)(e^{-i\th}w+e^{-i\th}z) dzdtd\th \ol{V_k(\ch_l)(w)} e^{-\pi\al_k \val w^2}dwd\th dzdt\\
\nn  &= &
\al_k i^{l-j}\sqrt {\frac{(\al_k/2)^{\la_k+j}}{(j+\la_k)!}}\sqrt 
{
\frac{(\al_k/2)^{\la_k+l}}
{(l+\la_k)!}
}\int_{\C}\int_\T\int_\C \int_\R e^{i\la_k \th}e^{i \al_k t}e^{-\frac{\al_k}{4}\vert  z\vert ^2}e^{-\al_k/2  w\ol z}\\
\nn  & &
F((e^{i\th},z,t))(e^{-i\th}w+e^{-i\th}z)^{j+\la_k} dzdtd\th \ol{w^{l+\la_k}} e^{-\pi\al_k \val w^2}dwd\th dzdt\\
\nn  &= &
\al_k i^{l-j}\sqrt {\frac{(\al_k/2)^{\la_k+j}}{(j+\la_k)!}}\sqrt 
{
\frac{(\al_k/2)^{\la_k+l}}
{(l+\la_k)!}
}\int_{\C}\int_\T\int_\C e^{-ij \th}e^{-\frac{\al_k}{4}\vert  z\vert ^2}e^{-\al_k/2  w\ol z}\\
\nn  & &
\widehat F^{3}((e^{i\th},z,\la_k))(w+z)^{j+\la_k} dzdtd\th \ol{w^{l+\la_k}} e^{-\pi\al_k \val w^2}dwd\th dz\\
\nn  & =&
\al_k i^{l-j}\sqrt {\frac{(\al_k/2)^{\la_k+j}}{(j+\la_k)!}}\sqrt 
{
\frac{(\al_k/2)^{\la_k+l}}
{(l+\la_k)!}
}\sum_{m=0}^\iy\sum_{q=0}^{\la_k+j}\binom{j+\la_k}q\int_{\C}\int_\T e^{-ij\th} \frac{(-\al_k /2 \ol z)^{m}}{m!}z^{j+\la_k-q}e^{-\frac{\al_k}{4}\vert  z\vert ^2}\\
\nn  & &
\widehat F^{3}((e^{i\th},z,\al_k))\int_\C   
\ol w^{l+\la_k}w^{m+q} e^{-\pi\al_k \val w^2}dwdzd\th\\
\nn  & &
\text{(orthogonality relations }\Rightarrow m=l+\la_k-q)\\
\nn  & =&
 i^{l-j}\sum_{\overset{l+\la_k\geq q}{0\leq q\leq j+\la_k}}\sqrt {\frac{(\al_k/2)^{\la_k+j}}{(j+\la_k)!}}\sqrt 
{
\frac{(\al_k/2)^{\la_k+l}}
{(l+\la_k)!}
}\binom{j+\la_k}q\int_{\C}\int_\T e^{-ij\th} \frac{(-\al_k /2 \ol z)^{l+\la_k-q}}{(l+\la_k-q)!}z^{j+\la_k-q}e^{-\frac{\al_k}{4}\vert  z\vert ^2}\\
\nn  & &
\widehat F^{3}((e^{i\th},z,\al_k))\frac{(\la_k+l)!}{(\al_k/2)^{l+\la_k}}dzd\th
\end{eqnarray*}
\begin{eqnarray*}
\nn  & =&
\sum_{\overset{l+\la_k-q\geq0}{0\leq q\leq j+\la_k}}
(-1)^{\la_k+l-q} i^{l-j}\frac{{\sqrt 
{
{(l+\la_k)!}
{(j+\la_k)!}
}}}{q!(j+\la_k-q)!}\frac{(\al_k/2 )^{\la_k-q+(j+l)/2}}{(l+\la_k-q)!}\\
\nn  & &
\int_\T e^{-ij\th}\int_{\C} (\ol z)^{l-j}\vert  z\vert ^{2(j+\la_k-q)}
e^{-\frac{\al_k}{4}\vert  z\vert ^2}\widehat F^{3}((e^{i\th},z,\al_k))dzd\th\\ 
\nn  & &
(q\to \la_k+j-q)\\
\nn  & =&
\sum_{\overset{l-j+q\geq 0}{0\leq q\leq j+\la_k}}
(-1)^{q+l-j} i^{l-j}\frac{{\sqrt 
{
{(l+\la_k)!}
{(j+\la_k)!}
}}}{q!(j+\la_k-q)!}\frac{(\la_k\al_k/2 )^{q+(l-j)/2}}{\la_k^{q+(l-j)/2}(q+l-j)!}\\
\nn  & &
 \int_{\C}(\ol z)^{l-j} \vert  z\vert ^{2q}e^{-\frac{\al_k}{4}\vert  z\vert ^2}\widehat F^{1,3}((-j,z,\al_k))dz\\ 
 \nn  &= &
\sum_{\overset{l-j+q\geq 0}{0\leq q\leq j+\la_k}}
(-1)^{q+l-j} i^{l-j}\frac{{\sqrt {(\la_k+j+1)\cdots (\la_k+l) }(j+\la_k-q+1 )\cdots (j+\la_k))
}}{\la_k^{q+(l-j)/2}(q!)^2}\frac{(\la_k\al_k/2 )^{q+(l-j)/2}}{(q+1)\cdots(q+l-j)}\\
\nn  & &
\int_{\C}(\frac{\ol z}{\val z})^{(l-j)} \vert  z\vert ^{2q+l-j}e^{-\frac{\al_k}{4}\vert  z\vert ^2}\widehat F^{1,3}((-j,z,\al_k))dz.
 \end{eqnarray*}

Furthermore, we have that:
\begin{eqnarray}\label{pir et de}
\nn &&
\sum_{\overset{l-j+q\geq 0}{0\leq q\leq +\iy}}  \int_{\C}
(-1)^{q+l-j}(\frac{\ol z}{\val z})^{(l-j)} i^{l-j} \frac{1}{(q!)^2}\frac{(r^2\vert  z\vert ^{2}/4  )^{q+\frac{(l-j)}{2}}}{(q+1)\cdots(q+l-j)}
 \widehat F^{1,3}((-j,z,0))dz\\
\nn   &= &\int_\T \int_{\C}i^{l-j} 
(\frac{-\ol z}{\val z})^{(l-j)} \sum_{\overset{l-j+q\geq 0}{0\leq q\leq +\iy}}  
(-1)^{q}\frac{1}{(q!)^2}\frac{(r^2\vert  z\vert ^{2}/4  )^{q+\frac{l-j}{2}}}{(q+1)\cdots(q+l-j)}
 e^{-ij\th}\widehat F^{3}((e^{i\th},z,0))dzd\th\\
 \nn  \nn  &= &i^{l-j} \int_\T e^{-ij\th} \int_{\C}(\frac{\ol z}{\val z})^{(l-j)}
J_{l-j}(-r\val z) 
 \widehat F^{3}((e^{i\th},z,0))dzd\th\\
   \nn  &= &i^{l-j} \int_\T e^{-ij\th} \int_{\C}(\frac{\ol z}{\val z})^{(l-j)}
\frac{i^{j-l}}{\pi}\int_0^{\pi}e^{i(-r\val z\cos(\ph))}\cos((l-j)\ph)d\ph
 \widehat F^{3}((e^{i\th},z,0))dzd\th\\
   \nn  &= &i^{l-j} \int_\T e^{-ij\th} \int_{\C}(\frac{\ol z}{\val z})^{(l-j)}
\frac{i^{j-l}}{2\pi}\int_{-\pi}^{\pi}e^{-i\Re(r\val ze^{i\ph})}e^{-i(l-j)\ph}d\ph
 \widehat F^{3}((e^{i\th},z,0))dzd\th\\
    \nn  &\underset{z=e^{i\nu(z)}\val z}{=}& \int_\T e^{-ij\th} \int_{\C}
\frac{1}{2\pi}\int_{-\pi}^{\pi}e^{-i\Re(r\ol z e^{i\nu(z)}e^{i\ph})}e^{-i(l-j)\nu(z)}e^{-i(l-j)\ph}d\ph
 \widehat F^{3}((e^{i\th},z,0))dzd\th\\
  \nn  &{=}& \int_\T e^{-ij\th} \int_{\C} 
\frac1{2\pi}\int_{-\pi}^{\pi}e^{-i\Re(r\ol z e^{i\ph})}e^{-i(l-j)\ph}d\ph
 \widehat F^{3}((e^{i\th},z,0))dzd\th\\
 \nn  &=& \frac{1}{2\pi}\int_{-\pi}^{\pi}\int_\T e^{-ij\th}
e^{-i(l-j)\ph} \widehat F^{2,3}((e^{i\th},re^{i\ph},0))d\ph d\th\\
 \nn  &=& \frac{1}{2\pi}\int_\T \int_\T e^{ij\th}
e^{-il\ph}\widehat F^{2,3}((e^{i(\ph-\th)},re^{i\ph},0))d\ph d\th\\
  &= & 
\langle \pi_{r}(F)(\ch_j),\ch_l\rangle .
 \end{eqnarray}
 \end{proof}
   
   \begin{remark}\label{limitsexplained}
\rm    Let $(\pi_{\la_k,\al_k})_k$ be a properly converging sequence in $\wh G_1 $. By Theorem \ref{limitset}  there exists  a convergent subsequence 
 (for  simplicity  of notations we  denote it also by $\la_k$ and $\al_k$)
 such that $(\alpha_k)_{k \in \N}$ tends to $0$ and that $\underset{k\to\infty}{\lim}\la_k\al_k=\om $, where $\om=\frac{r^2}{2}, r>0, $ or 
 $\underset{k\to\infty}{\lim}\la_k\al_k=0$.  If $\om>0 $, then, if $\al_k$ is positive
(respectively negative) for every  $k$,  we have that  $ \la_k>0$  (resp. $\la_k<0 $) for $ k$  large enough. 
 \end{remark}

\begin{lemma}\label{lim lakles j} 
Let $(\pi_{\la_k,\al_k})_k $ be a properly converging sequence in $\wh G_1 $. 
Suppose that  $ \underset{k\to\infty}{\lim} \la_k\al_k=\frac{r^2}{2}> 0$. 
Let $ F\in \l1{G_1}$, such that $ \widehat F^{1,3}\in C_c^\iy(\T\ti\C^2).$  Then we have that  
$$ \limk \noop{\pi_{\la_k,\al_k}(F)\circ V_k\circ(\Id- P_{\la_k,\sqrt{\vert \la_k\vert }})}=0.$$
Furthermore,  we have that 
\begin{eqnarray*}
\limk \noop{\pi_r(F)\circ(\Id- P_{\la_k,\sqrt{\vert \la_k\vert }})}=0
 \end{eqnarray*}

 \end{lemma}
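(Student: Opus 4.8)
The strategy is to exploit the explicit matrix-coefficient formulas from Lemma \ref{pik et de} to show that the operators $\pi_{\la_k,\al_k}(F)\circ V_k$ and $\pi_r(F)$, when restricted to the ``high-frequency'' part $(\Id-P_{\la_k,\sqrt{\vert\la_k\vert}})$, have operator norm tending to $0$. Since $\widehat F^{1,3}\in C_c^\infty(\T\times\C^2)$, the function $z\mapsto \widehat F^{1,3}((-j,z,\al_k))$ is supported in a fixed compact set in $z$, uniformly in $j$, and vanishes identically for $\vert j\vert$ larger than some fixed $N_0$; moreover its derivatives are uniformly bounded. The plan is: first estimate the size of a single matrix coefficient $\langle\pi_{\la_k,\al_k}(F)V_k(\ch_j),V_k(\ch_l)\rangle$ from Lemma \ref{pik et de} when $j$ or $l$ lies outside the window $\{\vert j\vert\le\sqrt{\vert\la_k\vert},\ j\ge-\la_k\}$; then sum these estimates using a Schur-test (Young/Cauchy--Schwarz on the index lattice) to bound the operator norm of the compression.

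First I would record the key combinatorial input. In the formula of Lemma \ref{pik et de}, the $q$-th term carries the factor
\[
\frac{\sqrt{(\la_k+j+1)\cdots(\la_k+l)}\,(j+\la_k-q+1)\cdots(j+\la_k)}{\la_k^{q+(l-j)/2}(q!)^2}\cdot\frac{(\la_k\al_k/2)^{q+(l-j)/2}}{(q+1)\cdots(q+l-j)}
\]
together with $\int_\C(\ol z/\vert z\vert)^{l-j}\vert z\vert^{2q+l-j}e^{-\al_k\vert z\vert^2/4}\widehat F^{1,3}((-j,z,\al_k))\,dz$. Since $\la_k\al_k\to r^2/2$ is bounded and $(j+\la_k-q)\cdots(j+\la_k)\le (j+\la_k)^q$, while $\vert z\vert$ is bounded on the support, the $q$-th term is controlled by roughly $\big((j+\la_k)(r^2/2+o(1))\cdot C^2/(\la_k q^2)\big)^q$ times a constant; when $j+\la_k$ is much larger than $\la_k$ — i.e. when $j$ is large and positive — or when $\vert z\vert$ times the relevant parameters forces the series to behave like a Bessel function of large argument, the tail of the $q$-sum is negligible, and comparison with the limiting Bessel expression (the second displayed formula in Lemma \ref{pik et de}, which is exactly $\langle\pi_r(F)\ch_j,\ch_l\rangle$ up to the $e^{-\al_k\vert z\vert^2/4}\to1$ correction) gives decay. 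The decisive quantitative point is that for $\vert j\vert$ or $\vert l\vert$ beyond $\sqrt{\vert\la_k\vert}$ the coefficient decays faster than any polynomial in $\vert j-l\vert$ and in $\min(\vert j\vert,\vert l\vert)$, uniformly in $k$; this is where the threshold $\sqrt{\vert\la_k\vert}$ is chosen, balancing the factor $\la_k$ in the denominator against the excursion $j+\la_k$ of the index.

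Next I would convert the pointwise coefficient bounds into an operator-norm bound. Identifying $V_k$ with the isometry from $\l2\T_{\la_k}$ onto $\F_{\al_k}(1)$ and using $V_k\circ V_k^*=\Id$, the operator $\pi_{\la_k,\al_k}(F)\circ V_k\circ(\Id-P_{\la_k,\sqrt{\vert\la_k\vert}})$ has matrix entries, in the basis $(\ch_j)$, equal to $\langle\pi_{\la_k,\al_k}(F)V_k(\ch_j),V_k(\ch_l)\rangle$ for $j$ outside the window and $0$ otherwise. By the Schur test the operator norm is bounded by $\sqrt{R\cdot C}$ where $R=\sup_l\sum_j\vert a_{l,j}\vert$ and $C=\sup_j\sum_l\vert a_{l,j}\vert$; the decay estimates from the previous step make both sums, restricted to $j$ (or $l$) outside $[-\sqrt{\vert\la_k\vert},\sqrt{\vert\la_k\vert}]$, tend to $0$ as $k\to\infty$. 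The same Schur argument applied to the limiting coefficients — now using the Bessel-function formula and the classical decay $J_n(x)\to0$ as $\vert n\vert\to\infty$ for fixed $x$, together with the compact support in $z$ — yields $\limk\noop{\pi_r(F)\circ(\Id-P_{\la_k,\sqrt{\vert\la_k\vert}})}=0$. Finally, one passes from the dense set of $F$ with $\widehat F^{1,3}\in C_c^\infty(\T\times\C^2)$ to general $F\in L^1(G_1)$ by the usual $\varepsilon/3$ density argument, since all the operators in sight are uniformly bounded by $\no F_1$.

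\textbf{Main obstacle.} The crux is the uniform-in-$k$ tail estimate on the $q$-series in Lemma \ref{pik et de}: one must show that for $j$ (or $l$) just past $\sqrt{\vert\la_k\vert}$ the combination of the ratio $(j+\la_k)/\la_k$, the factorials $(q!)^2$ in the denominator, and the bounded parameter $\la_k\al_k/2$ produces genuine (superpolynomial) decay rather than growth — in effect a non-asymptotic large-order/large-argument bound for the partial Bessel sums that is uniform as $\al_k\to0$, $\la_k\to\infty$ with $\la_k\al_k$ bounded. Handling the square-root weights $\sqrt{(\la_k+j+1)\cdots(\la_k+l)}$ in the off-diagonal ($l\ne j$) entries, and making the Schur sums converge uniformly, is the technically delicate part; everything else is bookkeeping with the compact support and smoothness of $\widehat F^{1,3}$.
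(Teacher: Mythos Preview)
Your approach via the full matrix-coefficient expansion of Lemma~\ref{pik et de} and a Schur test is considerably more elaborate than what the paper does, and the ``main obstacle'' you flag --- uniform control of the truncated $q$-series --- is entirely sidestepped by a simpler observation. Going back to the integral formula (\ref{pilaal F}) for $\pi_{\la_k,\al_k}(F)$ and plugging in $f=b_{\la_k+j,\al_k}$, the $\T$-integral separates off (because $b_{\la_k+j}(e^{-i\th}z)=e^{-i(\la_k+j)\th}b_{\la_k+j}(z)$) and produces only the $(-j)$-th Fourier coefficient of $\widehat F^3$ in the torus variable; what remains is exactly the Fock representation $\pi_{\al_k}$ of $\HH_1$ applied to $\widehat F^{1,3}(-j,\cdot,\al_k)\in L^1(\C)$. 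Hence
\[
\big\|\pi_{\la_k,\al_k}(F)\,V_k(\ch_j)\big\|_{\F_{\al_k}}
=\big\|\pi_{\al_k}\big(\widehat F^{1,3}(-j,\cdot,\al_k)\big)b_{\la_k+j}\big\|
\le \big\|\widehat F^{1,3}(-j,\cdot,\al_k)\big\|_{L^1(\C)}
\le \frac{C_F}{(1+|j|)^4},
\]
the last bound coming from smoothness in the $\T$-variable. For $\eta=\sum_{|j|>\sqrt{\la_k}}c_j\ch_j$ a single triangle inequality plus Cauchy--Schwarz then gives $\|\pi_{\la_k,\al_k}(F)V_k\eta\|\le \|\eta\|_2\cdot O(\la_k^{-1/2})$. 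The analogous bound for $\pi_r(F)\ch_j$ comes directly from the kernel formula in subsection~\ref{pilar}: $|\pi_r(F)\ch_j(e^{i\th})|\le C_F/(1+|j|)^4$. No $q$-series, no Bessel asymptotics, no Schur test is needed.

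Two further remarks. Your assertion that $\widehat F^{1,3}((-j,z,\al_k))$ ``vanishes identically for $|j|$ larger than some fixed $N_0$'' is not what the hypothesis provides: smoothness on $\T$ yields rapid \emph{decay} of Fourier coefficients in $j$, not finite support, and it is exactly this $O((1+|j|)^{-4})$ decay that drives the argument above. (Had one genuinely had finite support in $j$, the lemma would be trivial for large $k$ and your elaborate $q$-series estimates unnecessary --- so your plan is internally inconsistent on this point.) Finally, the $\varepsilon/3$ density extension to general $F\in L^1(G_1)$ in your last paragraph is not part of this lemma; the hypothesis restricts $F$, and the passage to $C^*(G_1)$ occurs only in the subsequent Theorem~\ref{estimates of sequences rne 0}.
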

\begin{proof} Suppose that $\al_k>0 $ for $k\in\N.  $ By (\ref{pilaal F}) we have for 
$ j\in \Z,\ \val j\geq \sqrt{\la_k},\  j\geq -\la_k$ that
\begin{eqnarray*}
 \pi_{\la_k,\al_k}(F)b_{\la_k+j}(w)&=&
\int_{\T}\int_{\C}e^{-ij \th}e^{-\frac{\al}{4}\vert  z\vert ^{2} +\frac{\al}{4}\vert w\vert ^2-
\frac{\al}{2} \Im( w\ol z)}\widehat F^3{(e^{i\th},z-w,\al)}b_{\la_k+j}(z) dzd\th\\
\nn  &= &
\int_{\C}e^{-\frac{\al}{4}\vert  z\vert ^{2} +\frac{\al}{4}\vert w\vert ^2-\frac{\al}{2} \Im( w\ol z)}\widehat 
F^{1,3}{(j,z-w,\al)}b_{\la_k+j}(z) dz\\
\nn  &= &
\pi_{\al_k}(\widehat F^{1,3}(-j))b_{\la_k+j}(w).
 \end{eqnarray*}
 Now, since $ \no{\widehat F^{1,3}(j)}_1\leq \frac{C_F}{(1+\vert j\vert )^4}, j\in\Z,$ for some $ C_F>0$  it follows that for $ k$  large enough, for any
for $$ \et=\sum_{\val j>\sqrt{\la_k}, j\geq -\la_k}c_{j}\ch_{\la_k+j}$$  we have  that 
\begin{eqnarray*}
 \no{\pi_{\la_k,\al_k}(F)\et}_2\nn  &= &
\no{\sum_{j}c_j \pi_{\al_k}(\widehat F^{1,3}(-j))b_{\la_k+\sqrt{\la_k}+j}}_2\\
\nn  &\leq &
(\sum_{j}\val {c_j}^2)^{1/2}(\sum_{j}\frac{C_F}{(\sqrt{\la_k}^2)(1+ \vert j\vert )^2})\\ 
\nn  &\leq &
\frac{1}{\sqrt{\la_k}}\no{\et}_2.  
 \end{eqnarray*}
Similarly for $ \pi_r(F)$:
\begin{eqnarray*}
 \pi_r(F)(\ch_j)(e^{i\th})\nn  &= &
\int_\T\widehat F^{2,3}(e^{i(\mu-\th)},e^{i\mu}r,0)e^{ij\mu}d\mu.
 \end{eqnarray*}
Then
\begin{eqnarray*}
 \val{\pi_r(F)(\ch_j)(e^{i\th})}\leq \frac{C_F}{(1+\val j)^{4}},\ j\in\Z,\ \th\in [ 0,2\pi[,
 \end{eqnarray*}
and so for $ k$  large enough and any $ \et=\sum_{\val j>\sqrt{\la_k}, j\geq -\la_k}c_{j}\ch_{\la_k+j}$ we have that
\begin{eqnarray*}
 \no{\pi_r(F)\et}_2\leq \frac{1}{\sqrt{\la_k}}\no\et_2.
 \end{eqnarray*}

\end{proof}

\begin{theorem}\label{estimates of sequences rne 0}  Let $(\pi_{\la_k,\al_k})_k $ be a properly converging sequence in $\wh G_1 $. 
Suppose that $ \underset{k\to\infty}{\lim} \la_k\al_k=\frac{r^2}{2}> 0$. 
Define :
\begin{eqnarray*}
 \sigma_{r,k}(a):=V_k\circ \pi_{r}(a)\circ V_k^*, a\in C^*(G_1), k\in\N.
 \end{eqnarray*}
Then  we have that 
\begin{eqnarray*}
 \limk \noop{\pi_{\la_k,\al_k}(a)-\sigma_{r,k}(a)}=0, a\in C^*(G_1).  
 \end{eqnarray*}
 \end{theorem}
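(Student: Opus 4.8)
The plan is to show that the operator fields $\pi_{\la_k,\al_k}(a)$ and $\sigma_{r,k}(a) = V_k\circ\pi_r(a)\circ V_k^*$ agree asymptotically in operator norm, first for a dense subalgebra of well-behaved functions and then by a standard $3\varepsilon$-argument for all of $C^*(G_1)$. Since $\sigma_{r,k}$ is manifestly a completely contractive map (being a compression of $\pi_r(a)$ by the isometry $V_k^*$, using $V_k\circ V_k^* = \Id$ from \eqref{identity Vk}), the family $(\sigma_{r,k})_k$ is uniformly bounded, so it suffices to establish the limit on the dense set $\D$ of $F\in L^1(G_1)$ with $\widehat F^{1,3}\in C_c^\infty(\T\times\C^2)$.

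First I would use Lemma \ref{lim lakles j} to reduce the problem to the ``low-frequency'' subspace: since $\pi_{\la_k,\al_k}(F)\circ V_k\circ(\Id - P_{\la_k,\sqrt{|\la_k|}})\to 0$ and $\pi_r(F)\circ(\Id - P_{\la_k,\sqrt{|\la_k|}})\to 0$ in operator norm, it is enough to compare $\pi_{\la_k,\al_k}(F)\circ V_k\circ P_{\la_k,\sqrt{|\la_k|}}$ with $V_k\circ\pi_r(F)\circ P_{\la_k,\sqrt{|\la_k|}}$, i.e.\ to control the matrix coefficients $\langle\pi_{\la_k,\al_k}(F)V_k(\ch_j), V_k(\ch_l)\rangle - \langle\pi_r(F)\ch_j,\ch_l\rangle$ for $|j|,|l|\le\sqrt{|\la_k|}$ (and $j,l\ge-\la_k$), where there are at most $O(\sqrt{|\la_k|})$ relevant indices in each variable. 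Here Lemma \ref{pik et de} is the crucial input: it gives closed-form expressions for both coefficients as sums over $q$. Comparing term by term, the factor $\frac{(\la_k\al_k/2)^{q+(l-j)/2}}{\la_k^{q+(l-j)/2}}$ in the first formula, together with $\sqrt{(\la_k+j+1)\cdots(\la_k+l)}\,(j+\la_k-q+1)\cdots(j+\la_k)/\la_k^{\,q+(l-j)/2}\to 1$ and $\la_k\al_k/2\to r^2/4$, converges exactly to the corresponding term $\frac{(r^2|z|^2/4)^{q+(l-j)/2}}{(q!)^2(q+1)\cdots(q+l-j)}$ of the $\pi_r$-formula, while $e^{-\frac{\al_k}{4}|z|^2}\to 1$ uniformly on the compact support of $\widehat F^{1,3}$.

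The main obstacle is to upgrade this entrywise convergence to convergence in operator norm, uniformly over the growing block of indices $|j|,|l|\lesssim\sqrt{|\la_k|}$. The key estimate needed is a tail bound: the decay $\|\widehat F^{1,3}(j)\|_1\le C_F(1+|j|)^{-4}$ and, crucially, a bound on the $q$-sum showing that the contribution of large $q$ (say $q$ above a fixed cutoff $Q$) to each matrix coefficient is uniformly small — this follows because the series defining $J_{l-j}$ and its discrete analogues converge, with remainders controlled independently of $k$ once $\la_k\al_k$ is bounded. Combining the tail bound with the entrywise limit over the finitely many pairs $(j,l,q)$ with $q\le Q$, one gets that the difference, viewed as an operator on $\ell^2$ of the low-frequency block, has all matrix entries $o(1)$ with a summable-in-$(j,l)$ majorant, hence small operator norm (e.g.\ via the Schur test, using the $(1+|j-l|)^{-2}$-type decay inherited from $\widehat F^{1,3}$). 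Assembling the three pieces — the two tail estimates from Lemma \ref{lim lakles j} and the uniform entrywise comparison — yields $\limk\|\pi_{\la_k,\al_k}(F) - \sigma_{r,k}(F)\|_{\rm op}=0$ for $F\in\D$, and the $3\varepsilon$-argument finishes the proof for general $a\in C^*(G_1)$; the case $\al_k<0$ (with $\la_k<0$) is handled symmetrically using the antiholomorphic Fock model.
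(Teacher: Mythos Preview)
Your overall strategy matches the paper's proof closely: reduce to a dense set of nice $F$, use Lemma~\ref{lim lakles j} to discard the high-frequency block, and then use the explicit coefficient formulas of Lemma~\ref{pik et de} to compare $\langle\pi_{\la_k,\al_k}(F)V_k(\ch_j),V_k(\ch_l)\rangle$ with $\langle\pi_r(F)\ch_j,\ch_l\rangle$ term by term in $q$, with a dominated-convergence type tail bound on the $q$-sum. This is exactly what the paper does.

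The one place where your sketch deviates, and where it would not quite work as written, is the passage from entrywise control to operator-norm control. You propose a Schur test based on ``$(1+|j-l|)^{-2}$-type decay inherited from $\widehat F^{1,3}$'', but the formulas of Lemma~\ref{pik et de} give decay in $j$ alone (through $\widehat F^{1,3}(-j,\cdot,\cdot)$), not in $|j-l|$; there is no obvious off-diagonal decay to feed into Schur. The paper instead obtains
\[
\bigl|\langle\pi_{\la_k,\al_k}(F)V_k(\ch_j),V_k(\ch_l)\rangle-\langle\pi_r(F)\ch_j,\ch_l\rangle\bigr|\le \frac{\de_k}{(1+|j|)^4}
\]
directly from the $j$-decay of $\widehat F^{1,3}$, then applies the \emph{same} estimate to $F^*$ to get the bound $\de_k/(1+|l|)^4$, and combines the two into $\de_k/\bigl((1+|j|)^2(1+|l|)^2\bigr)$. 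This product bound is Hilbert--Schmidt summable over \emph{all} $(j,l)$, so the operator norm of the difference on the low-frequency block is $\le C\de_k$. If you replace your Schur-test step by this $F\mapsto F^*$ symmetrization trick, your argument becomes essentially identical to the paper's.
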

\begin{proof} Suppose that $\al_k>0 $ for all $k\in\N $ (the case $\al_k<0 $ is similar). 
For   $ k$  large enough  it follows for any $l\geq j\in\Z $ and  $q\geq 0,  q+l-j\geq 0$ that: 
\begin{eqnarray*}
 \nn  & &
\frac{{\sqrt {(\la_k+j+1)\cdots (\la_k+l) }(j+\la_k-q+1 )\cdots (j+\la_k))
}}{\la_k^{q+(l-j)/2}(q!)^2}\frac{(\la_k\al_k/2 )^{q+(l-j)/2}}{(q+1)\cdots(q+l-j)}\val z^{2q}\\
\nn  \nn  &\leq &
\frac{\Big((1+\frac{l}{\la_k})(\frac{r^2}{4}+1)\Big)^{q+(l-j)/2}\val z^{2q}}{q!^2(q+1)^{l-j}}\\
\nn  \nn  &\leq &
\frac{\Big(\frac{r^2}{2}+2)^{q+(l-j)/2}\val z^{2q}}{q!^2(q+1)^{l-j}}.
 \end{eqnarray*}
 Let $ l,j\in \Z,\  l\geq -\la_k,\  \vert l\vert \leq \sqrt{\la_k},\  j\geq -\la_k,\  \vert j\vert \leq \sqrt{\la_k}$ . Then by (\ref{pila al et de}) and (\ref{pir et de}):
\begin{eqnarray*}
\nn  & &
\vert \langle\pi_{\la_k,\al_k}(F)V_k(\ch_j),V_k(\ch_l)\rangle-\langle \pi_{r}(F)(\ch_j),\ch_l\rangle \vert \\
 \nn  &=&\Big\vert 
\sum_{\overset{l-j+q\geq 0}{0\leq q\leq j+\la_k}} 
(-1)^{q+l-j} i^{l-j}\frac{{\sqrt {(\la_k+j+1)\cdots (\la_k+l) }(j+\la_k-q+1 )\cdots (j+\la_k))
}}{\la_k^{q+(l-j)/2}(q!)^2}\frac{(\la_k\al_k/2 )^{q+(l-j)/2}}{(q+1)\cdots(q+l-j)}\\
\nn  & &
\int_\T e^{-ij\th}\int_{\C}(\ol z)^{l-j} \vert  z\vert ^{2q}e^{-\frac{\al_k}{4}\vert  z\vert ^2}\widehat F^{3}((e^{i\th},z,\al_k))dzd\th\\
\nn  &- &
\sum_{\overset{l-j+q\geq 0}{0\leq q\leq +\iy}}   \int_{\C}
(-1)^{q+l-j}e^{-ij\th}(r \ol z/2 )^{(l-j)} i^{l-j}\frac{1}{(q!)^2}\frac{(r^2\vert  z\vert ^{2}/4  )^{q}}{(q+1)\cdots(q+l-j)}
 \widehat F^{1,3}((j,z,0))dz\Big\vert 
 \end{eqnarray*}
 Then, 
 \begin{eqnarray*}
  \nn  & &\vert \langle\pi_{\la_k,\al_k}(F)V_k(\ch_j),V_k(\ch_l)\rangle-\langle \pi_{r}(F)(\ch_j),\ch_l\rangle \vert \\
  \nn  &= &\Big\vert 
\sum_{\overset{l-j+q\geq 0}{0\leq q\leq j+\la_k}} 
(-1)^{q}\frac{{\sqrt {(1+\frac{j+1}{\la_k})\cdots (1+\frac{l}{\la_k}) }(1+\frac{j-q+1 }{\la_k})\cdots (1+\frac{j}{\la_k}))
}}{(q!)^2}\frac{(\la_k\al_k/2 )^{q+(l-j)/2}}{(q+1)\cdots(q+l-j)}\\
\nn  & &
 \int_{\C}(\ol z)^{l-j} \vert  z\vert ^{2q}e^{-\frac{\al_k}{4}\vert  z\vert ^2}\widehat F^{1,3}((j,z,\al_k))dz\Big\vert \\
\nn  &- &
\sum_{\overset{l-j+q\geq 0}{0\leq q\leq +\iy}}  \int_{\C} 
(-1)^{q}(r \ol z/2 )^{l-j}\frac{1}{(q!)^2}\frac{(r^2\vert  z\vert ^{2}/4  )^{q}}{(q+1)\cdots(q+l-j)}
 \widehat F^{1,3}((j,z,0))dz\Big\vert \\
  \nn  &\leq&
\sum_{\overset{l-j+q\geq 0}{0\leq q\leq \la_k+j}} 
\Big\vert \frac{{\sqrt {(1+\frac{j+1}{\la_k})\cdots (1+\frac{l}{\la_k}) }(1+\frac{j-q+1 }{\la_k})\cdots (1+\frac{j}{\la_k}))
}}{(q!)^2}\frac{(\la_k\al_k/2 )^{q+(l-j)/2}}{(q+1)\cdots(q+l-j)}\\
\nn  &- &
\frac{1}{(q!)^2}\frac{(r ^{2}/4  )^{q+(l-j)/2}}{(q+1)\cdots(q+l-j)}\Big\vert 
 \int_{\C}\val z^{l-j}\val z^{2q}\vert \widehat F^{1,3}((j,z,\al_k))\vert dz\\
 \nn  &+ &
\sum_{\overset{l-j+q\geq 0}{q= \la_k+j+1}\geq0}^\iy  \int_{\C}
\frac{1}{(q!)^2}\frac{(r^2\vert  z\vert ^{2}/4  )^{q+(l-j)/2}}{(q+1)\cdots(q+l-j)}
 \vert \widehat F^{1,3}((j,z,\al_k))\vert dz
\\
&+&\sum_{\overset{l-j+q\geq 0}{0\leq q\leq \la_k+j}} 
\Big\vert \frac{{\sqrt {(1+\frac{j+1}{\la_k})\cdots (1+\frac{l}{\la_k}) }(1+\frac{j-q+1 }{\la_k})\cdots (1+\frac{j}{\la_k}))
}}{(q!)^2}\frac{(\la_k\al_k/2 )^{q+(l-j)/2}}{(q+1)\cdots(q+l-j)}
\\
& &\int_\C(\ol z)^{l-j} \vert  z\vert ^{2q}\left(e^{-\frac{\al_k}{4}\vert  z\vert ^2}-1\right)\widehat F^{1,3}((j,z,\al_k))dz\Big\vert\\
\nn  & +&
 \sum_{\overset{l-j+q\geq 0}{q= 0}}^\iy \int_{\C}\frac{1}{(q!)^2}\frac{(r^2\vert  z\vert ^{2}/4  )^{q+(l-j)/2}}{(q+1)
 \cdots(q+l-j)}\Big \vert\widehat F^{1,3}((j,z,\al_k))-\widehat F^{1,3}((j,z,0))\Big \vert dz \\
 \nn  &\leq &
 \Big\vert (\la_k\al_k/2-{r^2/4})\Big\vert\Big(
\sum_{\overset{l-j+q\geq 0}{1\leq q\leq \la_k+j}} (q+(l-j)/2)
\frac{{\sqrt {(1+\frac{j+1}{\la_k})\cdots (1+\frac{l}{\la_k}) }(1+\frac{j-q+1 }{\la_k})\cdots (1+\frac{j}{\la_k}))
}}{(q!)^2(q+1)\cdots(q+l-j)} (r^2/4+1)^{q-1+(l-j)/2}\\
\nn  & &
 \int_{\C}\val z^{l-j}\val z^{2q}\vert \widehat F^{1,3}((j,z,\al_k))\vert dz\Big)\\
 \nn  &+ &\
\sum_{\overset{l-j+q\geq 0}{ q\geq \la_k+j+1}}  \int_{\C}
\frac{1}{(q!)^2}\frac{(r^2\vert  z\vert ^{2}/4  )^{q+(l-j)/2}}{(q+1)\cdots(q+l-j)}
 \vert \widehat F^{1,3}((j,z,\al_k))\vert dz
\Big \vert \\  
\nn  &+ &
\Big\vert \sum_{0\leq q\leq j+\la_k}
\frac{{\sqrt {(\la_k+j+1)\cdots (\la_k+l) }(j+\la_k-q+1 )\cdots (j+\la_k))
}}{\la_k^{q+(l-j)/2}(q!)^2}\frac{(\la_k\al_k/2 )^{q+(l-j)/2}}{(q+1)\cdots(q+l-j)}\\
\nn  & &
\int_{\C}(\ol z)^{l-j} \vert  z\vert ^{2q}\Big(e^{-\frac{\al_k}{4}\vert  z\vert ^2}-1\Big)\widehat F^{1,3}((j,z,\al_k))dz\Big \vert \\
\nn  & +&
 \sum_{\overset{l-j+q\geq 0}{q= 0}}^\iy \int_{\C}\frac{1}{(q!)^2}\frac{(r^2\vert  z\vert ^{2}/4  )^{q+(l-j)/2}}{(q+1)
 \cdots(q+l-j)}\Big \vert\widehat F^{1,3}((j,z,\al_k))-\widehat F^{1,3}((j,z,0))\Big \vert dz
 \end{eqnarray*}
 \begin{eqnarray*} 
 \nn  &\leq &
 \Big\vert (\la_k\al_k/2-{r^2/4})\big\vert \int_{\C}
\sum_{0\leq q<\iy}\frac{2^q(r^2/4+1)^{q}\val z^{2q}}{q!}
 \widehat F^{1,3}((j,z,\al_k))\vert dz\Big)\\
 \nn  &+ &\
\frac{1}{j+\la_k+1}\int_{\C}\sum_{q=0}^\iy  
\frac{2^q(r^2/4+1)\vert  z\vert ^{2q}  }{q!}
 \vert \widehat F^{1,3}((j,z,\al_k))\vert dz
 \\   
\nn  &+ &
2\al_k e\int_{\C}\sum_{q=0}^\iy  
\frac{2^q(r^2/4+1)\vert  z\vert ^{2q}  }{q!}
 \vert \widehat F^{1,3}((j,z,\al_k))\vert dz
+\int_{\C}\sum_{q=0}^\iy  
\frac{2^q(r^2/4+1)\vert  z\vert ^{2q}  }{q!}
 \vert \widehat F^{1,3}((j,z,\al_k))-\widehat F^{1,3}((j,z,0))\vert dz.
\end{eqnarray*} 
Hence
\begin{eqnarray*}
 \nn  & &
\vert \langle\pi_{\la_k,\al_k}(F)V_k(\ch_j),V_k(\ch_l)\rangle-\langle \pi_{r}(F)(\ch_j),\ch_l\rangle \vert \\
\nn  &\leq &
(\vert (\la_k\al_k/2-{r^2/4})\vert +\frac{2}{\la_k}+2e\al_k)\int_\C e^{(r^2/2+2)\val z^2}\vert\widehat F^{1,3}(j,z,\al_k)\vert dz\\
&+&\int_\C e^{(r^2/2+2)\val z^2}\vert\widehat F^{1,3}(j,z,\al_k)-\widehat F^{1,3}(j,z,0)\vert dz\\
\nn  &\leq &
\de_k\frac{1}{(1+\val j)^4}
\end{eqnarray*}

for some sequence $(\de_k)_k $ in $\R_+ $ with $\limk\de_k=0 $.
Replacing $ F$  by $ F^*$  we see that also
\begin{eqnarray*}
 \vert \langle\pi_{\la_k,\al_k}(F)V_k(\ch_j),V_k(\ch_l)\rangle-\langle \pi_{r}(F)(\ch_j),\ch_l\rangle \vert\leq \frac{\de_k}{(1+\val l)^4},k\in\N. \\
 \end{eqnarray*}
Finally
\begin{eqnarray*}
 \vert \langle\pi_{\la_k,\al_k}(F)V_k(\ch_j),V_k(\ch_l)\rangle-\langle \pi_{r}(F)(\ch_j),\ch_l\rangle \vert\leq \frac{\de_k}{(1+\val l)^2(1+\val j)^2},k\in\N, \val l,\val j\leq \sqrt{\la_k}.
 \end{eqnarray*}

Hence for any $\et_k=\sum_{j=-\sqrt{\la_k}}^{\sqrt{\la_k}}c_j^k\ch_j,\ps_k=\sum_{j=-\sqrt{\la_k}}^{\sqrt{\la_k}}d_j^k\ch_j\in\l2\T $ we have that 
\begin{eqnarray*} 
 \nn  & &
\val{
\langle \pi_{\la_k,\al_k}(F)V_k(\et_k),V_k(\ps_k)\rangle-\langle \pi_r(F)\et_k,\ps_k\rangle
}\\
 &\leq &
C\no {\et_k}_2 \no {\ps_k}_2 \de_k
\end{eqnarray*}
for $ C:=\sqrt{  
\sum_{j=0}^\iy\frac{1}{(1+j)^4}
}$ .
This shows together with Lemma \ref{lim lakles j} that
\begin{eqnarray*}
 \limk \noop{\pi_{\la_k,\al_k}(F)\circ V_k- V_k\circ\sigma_{r,k}(F)}=0.
 \end{eqnarray*}
Hence 
 \begin{eqnarray*}
 \limk \noop{\pi_{\la_k,\al_k}(F)- \sigma_{r,k}(F)}=0.
 \end{eqnarray*}

\end{proof} 
\subsection{Convergence to characters.}

If the sequence $ (\la_k\al_k)_k$  tends to 0,  $\al_k>0 $ (resp. $\al_k<0 $) for $k\in\N $ and the sequence $ (\val{\la_k})_k$ has  a bounded subsequence, 
then we find a  subsequence such that $ \la_k=\la_\iy(\in\Z)$  for all $ k$. The limit set $ L$  of this subsequence is according to Theorem \ref{limitset} given by
\begin{eqnarray*}
 L=\{\ch_j;\ j\in\Z,\ j\leq\la_\iy\}\ (\text{ resp. }L=\{j\in\Z, j\geq \la_\iy\}).
 \end{eqnarray*}
 If the sequence $(\vert \la_k\vert )_k $ is unbounded,  the fact that the limit set of the sequence $(\pi_{\la_k,\la_k})_k $ 
 is not empty forces $\limk \la_k=+\iy=:\la_\iy$ if $\al_k>0 $ (resp. $\limk \la_k=-\iy=:\la_\iy $ if $\al_k<0, k\in\N $). 
 
Let for $\la_\iy\in\Z \cup \{+\iy,-\iy\} $ 
\begin{eqnarray*}
 \pi_{\la_\iy,0} :=\oplus_{\la\leq \la_\iy}\ch_\la \ (\text{ resp. } \pi_{\la_\iy,0} :=\oplus_{\la\geq \la_\iy}\ch_\la ))
 \end{eqnarray*}
be the direct sum of the characters $\ch_\la, \la\leq \la_\iy  $ (resp. of the characters $\la\geq \la_\iy $).

\begin{theorem}\label{estimates of sequences r 0}  
 Let $(\pi_{\la_k,\al_k})_k $ be a properly converging sequence in $\wh G_1 $. 
Suppose that  $ \underset{k\to\infty}{\lim} \al_k=0$ and that $\limk \al_k \la_k=0 $.
Define:
\begin{eqnarray*}
 \sigma_{\la_\iy,k}(a):=V_k\circ \pi_{\la_\iy,0}(a)\circ V_k^*, a\in C^*(G_1), k\in\N.
 \end{eqnarray*}

Then for every $ a\in C^*(G_1)$ we have that
\begin{eqnarray*}
 \limk \noop{\pi_{\la_k,\al_k}(a)-\sigma_{\la_\iy,k}(a)}=0.  
 \end{eqnarray*}
 \end{theorem}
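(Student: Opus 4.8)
The plan is to run the argument in parallel with the proof of Theorem~\ref{estimates of sequences rne 0}, using that the limiting matrix coefficients are now those of a direct sum of characters --- formally the degeneration $r\to 0$ of the representation $\pi_r$, in which the Bessel function $J_{l-j}(-r\val z)$ appearing in Lemma~\ref{pik et de} collapses to $\de_{l,j}$. As there, it suffices to treat $F\in\l1{G_1}$ with $\widehat F^{1,3}\in C_c^\iy(\T\ti\C^2)$, these being dense in $C^*(G_1)$ while all maps in sight are contractive; and we may assume $\al_k>0$ for all $k$, the case $\al_k<0$ being identical after passing to the antiholomorphic Fock space and to the mirror statement ($L=\{\ch_j\mid j\geq\la_\iy\}$). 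By Theorem~\ref{limitset} and the discussion preceding the statement, either $\val{\la_k}\to\iy$, so that $\la_k\to+\iy$, $\la_\iy=+\iy$ and $L=\{\ch_j\mid j\in\Z\}$, or, after passing to a subsequence, $\la_k=\la_\iy\in\Z$ for all $k$ and $L=\{\ch_j\mid j\leq\la_\iy\}$. Unwinding $\sigma_{\la_\iy,k}=V_k\circ\pi_{\la_\iy,0}\circ V_k^*$ through $V_k(\ch_j)=i^jb_{j+\la_k,\al_k}$ shows that $\sigma_{\la_\iy,k}(F)$ is diagonal in the orthonormal basis $\bigl(V_k(\ch_j)\bigr)_j$ of $\F_{\al_k}(1)$, with $\langle\sigma_{\la_\iy,k}(F)V_k(\ch_j),V_k(\ch_l)\rangle=\de_{l,j}\,\ch_{-j}(F)$ and $\ch_{-j}(F)=\int_\C\widehat F^{1,3}(-j,z,0)\,dz$.

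Next I would estimate the matrix coefficients on a bulk of indices. In the formula of Lemma~\ref{pik et de} for $\langle\pi_{\la_k,\al_k}(F)V_k(\ch_j),V_k(\ch_l)\rangle$ the single term $l=j$, $q=0$ equals $\int_\C e^{-\frac{\al_k}{4}\val z^2}\widehat F^{1,3}(-j,z,\al_k)\,dz$, which tends to $\ch_{-j}(F)$ because $\al_k\to 0$ and $\widehat F^{1,3}$ is compactly supported and continuous in $\al$; every other term carries a factor $(\la_k\al_k/2)^{q+(l-j)/2}$ with exponent $\geq 1/2$, hence $\to 0$ since $\la_k\al_k\to 0$. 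Bounding the factorial quotients exactly as in the proof of Theorem~\ref{estimates of sequences rne 0} --- with the constant $r^2/4$ there replaced by $\la_k\al_k/2$, and with $\val l,\val j$ below the chosen cutoff so that the factors $(1+l/\la_k)^{\pm}$ are $1+o(1)$ --- then summing $\sum_q\frac{(\la_k\al_k\val z^2/2)^q}{(q!)^2}\leq e^{\val z\sqrt{2\la_k\al_k}}=1+o(1)$, and using $\no{\widehat F^{1,3}(m)}_1=O((1+\val m)^{-4})$ together with continuity of $\al\mapsto\widehat F^{1,3}(m,z,\al)$ at $0$ (and replacing $F$ by $F^*$ to gain the symmetric decay in $\val l$), one obtains
\[
\Bigl\vert\langle(\pi_{\la_k,\al_k}(F)-\sigma_{\la_\iy,k}(F))V_k(\ch_j),V_k(\ch_l)\rangle\Bigr\vert\leq\frac{\de_k}{(1+\val l)^2(1+\val j)^2}
\]
with $\de_k\to 0$, uniformly over $j,l$ in the bulk; a Cauchy--Schwarz estimate on the resulting bilinear form then gives $\noop{(\pi_{\la_k,\al_k}(F)-\sigma_{\la_\iy,k}(F))\circ V_k\circ P}\leq C\de_k\to 0$, where $P$ is the spectral projection onto the bulk modes.

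For the tail I would proceed as follows. When $\val{\la_k}\to\iy$, take $P=P_{\la_k,\sqrt{\val{\la_k}}}$: then $\noop{\pi_{\la_k,\al_k}(F)\circ V_k\circ(\Id-P_{\la_k,\sqrt{\val{\la_k}}})}\to 0$ by the argument of Lemma~\ref{lim lakles j} (whose proof uses only $\la_k\to\iy$ and the $O((1+\val j)^{-4})$ decay, not the value of $\la_k\al_k$), while $\noop{\sigma_{\la_\iy,k}(F)\circ(\Id-P_{\la_k,\sqrt{\val{\la_k}}})}\leq\sup_{\val j>\sqrt{\val{\la_k}}}\val{\ch_{-j}(F)}\to 0$ since $\sigma_{\la_\iy,k}(F)$ is diagonal. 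When $\la_k=\la_\iy$ is constant the fixed projection $P_{\la_\iy,\sqrt{\val{\la_\iy}}}$ is useless, so instead one chooses a slowly growing cutoff $N_k\to\iy$ with $N_k\al_k\to 0$ (e.g. $N_k=\lfloor\al_k^{-1/2}\rfloor$), which keeps $(j+\la_\iy)\al_k\to 0$ on the bulk $\val j\leq N_k$ so that the previous step still applies; on the complement, $\pi_{\la_\iy,\al_k}(F)b_{j+\la_\iy,\al_k}=\pi_{\al_k}(\widehat F^{1,3}(-j))b_{j+\la_\iy,\al_k}$ has norm $\leq\no{\widehat F^{1,3}(-j)}_1$, whence $\noop{\pi_{\la_\iy,\al_k}(F)\circ Q}\leq\bigl(\sum_{\val j>N_k}\no{\widehat F^{1,3}(j)}_1^2\bigr)^{1/2}\to 0$ for $Q$ the projection onto $\{\val j>N_k\}$, and likewise for $\sigma_{\la_\iy,k}(F)$. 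Adding bulk and tail and dropping the unitary $V_k$ gives $\limk\noop{\pi_{\la_k,\al_k}(F)-\sigma_{\la_\iy,k}(F)}=0$, and density finishes the proof. The main obstacle is the uniform bulk estimate of the second paragraph: the off-diagonal and $q\geq 1$ contributions of Lemma~\ref{pik et de} must be shown to be $O(\de_k)$ with a single $\de_k\to 0$ and with enough polynomial decay in $\val j,\val l$ to be summed --- the same bookkeeping as in Theorem~\ref{estimates of sequences rne 0}, balancing the vanishing factor $(\la_k\al_k)^{q+(l-j)/2}$ against the large factorial quotients, the series $\sum_q((j+\la_k)\al_k\val z^2/2)^q/(q!)^2$ and the Schwartz decay of $\widehat F^{1,3}$, but now in the easier regime $\la_k\al_k\to 0$ where those series are tamed by $e^{\val z\sqrt{2\la_k\al_k}}\to1$; the secondary point is merely the need to let the cutoff grow when $(\val{\la_k})_k$ is bounded.
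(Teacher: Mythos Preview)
Your argument is correct, but it takes a more elaborate route than the paper's. The key difference is the choice of dense subalgebra: the paper works with $F$ for which $\widehat F^{1,3}$ has \emph{finite} support in the Fourier variable $j$ (and compact support in $z$), not merely $C_c^\infty$. With that choice the matrix $\bigl(\langle\pi_{\la_k,\al_k}(F)V_k(\ch_j),V_k(\ch_l)\rangle\bigr)_{j,l}$ has only the entries with $\val j,\val l\le m$ possibly nonzero (by Lemma~\ref{pik et de} the coefficient vanishes when $\widehat F^{1,3}(-j,\cdot,\cdot)=0$, and the $F\leftrightarrow F^*$ trick handles $l$), so the whole problem reduces to a \emph{fixed finite} family of matrix elements. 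For each of these the formula of Lemma~\ref{pik et de} gives directly that the $q=0$, $l=j$ term tends to $\widehat F^{1,2,3}(-j,0,0)$ while every other term is $O(\val{\al_k\la_k}^{1/2})$ uniformly (the factorial ratios are bounded by powers of $\la_k+m$, cancelled by the $\la_k^{-q-(l-j)/2}$), yielding
\[
\bigl\vert\langle\pi_{\la_k,\al_k}(F)V_k(\ch_j),V_k(\ch_l)\rangle-\de_{j,l}\widehat F^{1,2,3}(-j,0,0)\bigr\vert\le \val{\al_k\la_k}\,D_F\,1_{[-m,m]}(j)1_{[-m,m]}(l).
\]
No bulk/tail split is required, and the dichotomy $\val{\la_k}\to\iy$ versus $\la_k=\la_\iy$ constant never enters.

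Your approach instead follows the template of Theorem~\ref{estimates of sequences rne 0}: Schwartz decay in $j$, a spectral projection onto a bulk, and a separate tail estimate via the mechanism of Lemma~\ref{lim lakles j}. That works, but it costs you the extra bookkeeping of the moving cutoff $N_k$ in the bounded-$\la_k$ case and the Cauchy--Schwarz summation over the bulk. What the paper's choice buys is precisely the elimination of that tail argument; what your approach buys is a uniform treatment of Theorems~\ref{estimates of sequences rne 0} and~\ref{estimates of sequences r 0} by the same machinery, with the present case appearing as the degeneration $r\to0$ (Bessel $\to\de_{l,j}$) that you describe.
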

\begin{proof} We consider only the case $\al_k>0, k\in \N $. Suppose first that $ \widehat F^{1,3}$ (and so also $ (\widehat F^*)^{1,3}$) has finite support in the first variable $ j$ and compact support in the variable $ z$. 
  Then we have for some $ m\in\N$ and some compact ball  $K= B_R\subset \C$ that 
  \begin{eqnarray*}
 \vert \widehat F^{1,3}(j,z,\la_k)\vert \leq {C_F} 1_{[ -m,m]}(j)1_K(z), j\in\Z,z\in\C,\al\in\R.
 \end{eqnarray*} 
 
 Let  $ l,j\in \Z, l >j , l\geq -\la_\iy,   j\geq -\la_\iy$. 
 Lemma \ref{pik et de} implies that
 \begin{eqnarray}\label{r=o id} 
 \nn& &
 \langle\pi_{\la_k,\al_k}(F)V_k(\ch_j),V_k(\ch_l)\rangle\\
\nn  & =&
\sum_{\overset{l-j+q\geq 0}{0\leq q\leq j+\la_k}}
(-1)^{q+l-j} i^{l-j}\frac{{\sqrt {(\la_k+j+1)\cdots (\la_k+l) }(j+\la_k-q+1 )\cdots (j+\la_k))
}}{\la_k^{q+(l-j)/2}(q!)^2}\frac{(\la_k\al_k/2 )^{q+(l-j)/2}}{(q+1)\cdots(q+l-j)}\\
\nn  & &
\int_{\C}(\frac{\ol z}{\val z})^{(l-j)} \vert  z\vert ^{2q+l-j}e^{-\frac{\al_k}{4}\vert  z\vert ^2}\widehat F^{1,3}((-j,z,\al_k))dz.
 \end{eqnarray}
 In particular we see that
 \begin{eqnarray*}
 \langle\pi_{\la_k,\al_k}(F)V_k(\ch_j),V_k(\ch_l)\rangle=0,
 \end{eqnarray*}
if $\val j >m$ or $\val j>m $.
It follows then that 
\begin{eqnarray*}
 \val{\langle\pi_{\la_k,\al_k}(F)V_k(\ch_j),V_k(\ch_l)\rangle}\nn  &\leq &
\sum_{\overset{l-j+q\geq 0}{0\leq q\leq j+\la_k}}\vert \la_k\al_k\vert^{q+(l-j)/2}(1+\frac{m}{\vert \la_k\vert })^{q+(l-j)/2}\frac{1}{(q!)^2} 
\\  
\nn  & & 
 {C_F} R ^{2q+l-j}1_{[ -m,m]}(j)1_{[ -m,m]}(l)\int_{\C}e^{-\frac{\al_k}{4}\val z^2}  1_K(z)dz\\
\nn  &\leq &
\vert \la_k\al_k\vert^{(l-j)/2}R^{l-j}C_F\int_{K}dz 1_{[ -m,m]}(j)1_{[ -m,m]}(l)
\sum_{q=0}^{\iy}(\vert \la_k\al_k\vert(1+\frac{m}{\vert \la_k\vert })R)^{q}\frac{1}{(q!)} 
 \end{eqnarray*} 
(similarly if $j>l $.). For $l=j\in \Z,  l\geq -\la_\iy$ we see that 
  \begin{eqnarray}\label{r=o idequal} 
 \nn& &
 \langle\pi_{\la_k,\al_k}(F)V_k(\ch_j),V_k(\ch_j)\rangle\\
\nn  & =&\int_{\C} e^{-\frac{\al_k}{4}\vert  z\vert ^2}\widehat F^{1,3}((-j,z,\al_k))dz+\sum_{\overset{q> 0}{ q\leq j+\la_k}}
(-1)^{q} \frac{{ 
{
{(j+\la_k)!}
}}}{q!(j+\la_k-q)!}\frac{(\la_k\al_k/2 )^{q}}{\la_k^{q}q!}\\
\nn  & &
 \int_{\C} \vert  z\vert ^{2q}e^{-\frac{\al_k}{4}\vert  z\vert ^2}\widehat F^{1,3}((-j,z,\al_k))dz
 \end{eqnarray}
 and that
 \begin{eqnarray}\label{pir et de}
\langle \pi_{\la_\iy,0}(F)(\ch_j),\ch_l\rangle
\nn  &= &
\int_\T\int_\T \widehat F^{2,3}(\th,0,0)e^{i j(\mu-\th)}e^{-il \mu}d\th d\mu\\
\nn  &= &
\int_\T \widehat F^{1,2,3}(-j,0,0)e^{i (j-l)\mu} d\mu\\
\nn  &= &
\de_{j,l}\widehat F^{1,2,3}(-j,0,0). 
 \end{eqnarray}

Hence for some constant $ D_F>0$ , we have that 
\begin{eqnarray*}
 \val{\langle\pi_{\la_k,\al_k}(F)V_k(\ch_j),V_k(\ch_l)\rangle-\de_{j,l}\widehat F^{1,2,3}(j,0,0)}&\leq& 
\val{\al_k\la_k} D_F1_{[ -m,m]}(j)1_{[ -m,m]}(l).
 \end{eqnarray*}

This shows  that
for any $ \et_k=\sum_{j\in\Z}c^k_j \ch_j$  we have that
\begin{eqnarray*}
 \nn  & &
\no{\pi_{\la_k,\al_k}(F)\circ V_k(\et_k)-V_k\circ \pi_{\la_\iy,0}(F)(\et_k)}_2^2\\
\nn  &= &
\sum_{\underset{j\geq -\la_\iy}{l\geq -\la_\iy}}c_j\ol{c_l}\langle (\pi_{\la_\iy,\al_k}(F)\circ V_k-\de_{j,l}\widehat F^{1,2,3}(-j,0,0))(V_k(\ch_j)),(\pi_{\la_\iy,\al_k}(F)\circ V_k-\de_{j,l}\widehat F^{1,2,3}(-l,0,0))(V_k(\ch_l))\rangle\\
\nn  &= &
\sum_{\underset{j\geq -\la_\iy}{l\geq -\la_\iy}}c_j\ol{c_l}\Big(\langle (\pi_{\la_k,\al_k}(F^*\ast F)\circ V_k(\ch_j),V_k(\ch_l)\rangle-
\de_{j,l}\ol{\widehat F^{1,2,3}}(-j,0,0)) 
\langle\pi_{\la_k,\al_k}(F)(V_k(\ch_j)),V_k(\ch_l)\rangle\\
\nn  &- &
\de_{j,l}\widehat F^{1,2,3}(l,0,0)\langle V_k(\ch_j),
\pi_{\la_k,\al_k}( F)( V_k(\ch_l)\rangle  + 
\de_{j,l}\widehat{(F^*\ast F)}^{1,2,3}(-j,0,0)\Big)\\
\nn  &\leq &
E_k\vert \al_k\la_k\vert \no{\et_k}^2
 \end{eqnarray*}
for some new constant $E_k>0  $. Therefore
\begin{eqnarray*}
 \limk \noop{\pi_{\la_\iy,\al_k}(F)-\sigma_{\la_\iy,k}(F)}=0.
 \end{eqnarray*}
Since these $ F$ 's are dense in $ C^*(G_1)$ the theorem follows.

\end{proof} 

  \subsection{The $C^*$-algebra of the group $G_1.$}
  \begin{definition}
  Let us recall that 
  \begin{eqnarray*}
 \widehat{G_1}&=&
 \{\pi_{\la,\al}\vert \la\in\Z,\al\in\R^*\}\bigcup \{\pi_r\vert r>0\}\bigcup \{\pi_\la\vert \la\in\Z\}\\
 \nn  &= &
\GA_2\cup \GA_1\cup\GA_0.
 \end{eqnarray*}

   Define for $c\in C^*(G_1)$ the Fourier transform $\F(c)$ of $c$ by 
   \begin{itemize}
   \item $\F(c)(\la,\al)=\pi_{\la,\al}(c)\in\B(\H_{\pi_{\la,\al}}),\ \ (\la,\al)\in \Ga_2.$
    \item $\F(c)(r)=\pi_{r}(c)\in\B(\H_{\pi_{r}}),\ \ r\in\Ga_1.$
    \item $\F(c)(\la)=\chi_\la(c)\in\C,\ \la\in\Ga_0.$
   \end{itemize}
  \end{definition}
\begin{definition}
 Let $\D_1$ be the family consisting of all operator fields $A\in\ell^\iy(\widehat{G_1})$ satisfying the following conditions 
 \begin{enumerate}
  \item $A(\ga)$ is a compact operator on $\H_{\pi_{\ga}}$ for every $\ga\in\g_1^\ddag/G_1.$
  \item The mapping $\g_1^\ddag/G_1\longrightarrow\B(\H_{\pi_\ga}):\ \ga\longmapsto A(\ga)$ 
  is norm continuous on $\GA_2,\GA_1 $ and on $\GA_0 $.
  \item $\underset{\ga\to\iy}{\lim}\Vert A(\ga)\Vert_{op}=0.$
  \item $\underset{r\to0}{\lim}\Vert A(r)-A(0)\Vert_{op}=0$ uniformly in $\la$,
  where
  \begin{eqnarray*}
 A(0):=\oplus_{\la\in\Z}A(\la).
 \end{eqnarray*}

  \item For every properly converging sequence $(\pi_{\la_k,\al_k})_k$ such that 
  $ \underset{k\to\infty}{\lim} \la_k\al_k=\frac{r^2}{2}> 0$, we have 
  $$\underset{k\to\iy}{\lim}\Vert A(\la_k,\al_k)-V_k\circ A(r)\circ V_k^*\Vert_{op}=0.$$
  For every properly converging sequence $(\pi_{\la_k,\al_k})_k$ such 
  that $\underset{k\to\iy}{\lim}\la_k=\la_\iy$ and  $\limk \al_k=0, \underset{k\to\infty}{\lim} \la_k\al_k=0$ and $\ve \al_k> 0, \ k\in\N, \ve =+1 \text{ for all k or } \ve=-1 \text{ for all k},  $
  and that 
   we have 
  $$\underset{k\to\iy}{\lim}\Vert A(\la_k,\al_k)-V_k\circ A(\la_\iy,0)\circ V_k^*\Vert_{op}=0,$$
 \end{enumerate}
where 
\begin{eqnarray*}
 A(\la_\iy,0):=\oplus_{ \la\leq \la_\iy} A(\la)\ (\text{if }\al_k>0 \text{ for all }k)\text{ resp. }A(\la_\iy,0):=\oplus_{ \la\geq \la_\iy} A(\la)\ (\text{if }\al_k<0 \text{ for all }k).
 \end{eqnarray*}

\end{definition}
As  a consequence of relation \ref{identity Vk}, Theorem \ref{estimates of sequences rne 0}, Theorem  \ref{estimates of sequences r 0} and Theorem 4.10 in \cite{Lud-Ell-Abd}, we can apply Theorem 3.5 in \cite{Lud-Reg1} and therefore we have 
\begin{theorem}
 The $C^*$-algebra of $G_1$ is isomorphic to $\D_1$ under the Fourier
transform and $C^*(G_1)$ fulfills  the \textit{NCDL} condition.
\end{theorem}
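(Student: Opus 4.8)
The plan is to verify that $C^*(G_1)$ fits the hypotheses of the abstract characterization of \textit{NCDL}-algebras (Theorem 3.5 in \cite{Lud-Reg1}) with the concrete data assembled above, and then to read off that the corresponding operator-field algebra is exactly $\D_1$. Recall from Section \ref{charac} that the spectrum decomposes into the three Hausdorff strata $\wh G_1=\GA_2\cup\GA_1\cup\GA_0$; I would set $S_{-1}:=\emptyset$, $S_0:=\GA_0$, $S_1:=\GA_0\cup\GA_1$ and $S_2:=\wh G_1$, with $\GA_0\cong\Z$ discrete, $\GA_1\cong\R_{>0}$ and $\GA_2\cong\Z\times\R^*$, so that $\wh G_1$ is locally compact of step $\leq 2$ (the homeomorphism with the admissible coadjoint orbit space being Theorem \ref{homeomorphism dual orbit space}).

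First I would establish that $\F(C^*(G_1))$ is continuous of step $\leq 2$. On the dense $*$-subalgebra of $F\in L^1(G_1)$ with $\wh F^{1,3}\in C_c^\infty(\T\times\C^2)$, the explicit formula (\ref{pilaal F}) for $\pi_{\la,\al}(F)$, the formula of Section \ref{pilar} for $\pi_{\la,r}(F)$, and the obvious formula of Section \ref{charac} for $\ch_\la(F)$ show that each field $\F(F)$ restricted to $\GA_2$, to $\GA_1$, and to $\GA_0$ is operator-norm continuous, takes values in the compact (in fact Hilbert--Schmidt) operators, and tends to $0$ at infinity along each stratum; passing to the norm limit extends these properties to all of $C^*(G_1)$. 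Since the Fourier transform is then continuous of finite step, Theorem \ref{newcor} already gives that $C^*(G_1)$ has the \textit{NCDL} property; what remains is to identify the controls and pin down $\F(C^*(G_1))$.

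Next I would run through the properly converging sequences whose limit set lies in a strictly lower stratum. A sequence in $\GA_1$ converging into $\GA_0$ is the $r\to 0$ degeneration of the motion-group part $\T\ltimes\C$ of $G_1$, whose control and estimate are Theorem 4.10 in \cite{Lud-Ell-Abd}; this yields condition (4) of $\D_1$, with $A(0)=\oplus_{\la\in\Z}A(\la)$, uniformly in $\la$. A properly converging sequence $(\pi_{\la_k,\al_k})_k$ in $\GA_2$ with $\la_k\al_k\to r^2/2>0$ is governed by Theorem \ref{estimates of sequences rne 0}, with control $\sigma_{r,k}=V_k\circ\pi_r(\cdot)\circ V_k^*$; one with $\al_k\to 0$ and $\la_k\al_k\to 0$ is governed by Theorem \ref{estimates of sequences r 0}, with control $\sigma_{\la_\infty,k}=V_k\circ\pi_{\la_\infty,0}(\cdot)\circ V_k^*$; together these give condition (5). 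Sequences that remain inside $\GA_2$, or converge from $\GA_2$ into $\GA_1$, need nothing beyond the continuity already established. By the identity (\ref{identity Vk}), $V_k\circ V_k^*=\Id$, so each $\sigma_{\cdot,k}$ is a completely positive, completely contractive, involutive map into $\B(\H_{\pi_{\la_k,\al_k}})$, matching the hypotheses. With the stratification, the realizations of Sections \ref{generic}--\ref{charac}, and this family of controls in hand, Theorem 3.5 in \cite{Lud-Reg1} applies and shows that $\F$ is an isometric $*$-isomorphism of $C^*(G_1)$ onto the algebra of operator fields satisfying precisely conditions (1)--(5), i.e.\ onto $\D_1$; in particular $C^*(G_1)$ fulfils the \textit{NCDL} condition.

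The substantive analytic content is already carried by Theorems \ref{estimates of sequences rne 0} and \ref{estimates of sequences r 0} together with the motion-group estimate of \cite{Lud-Ell-Abd}; the rest is bookkeeping. The one point deserving genuine care is the sufficiency direction: one must check not merely that every $\F(c)$ lies in $\D_1$, but that every field in $\D_1$ is of the form $\F(c)$. This surjectivity is exactly what the machinery of \cite{Lud-Reg1} delivers, provided the control data are rich enough to separate the strata — which is why all limiting regimes of Theorem \ref{limitset} must be covered, including both signs of $\al_k$ (handled via the holomorphic and antiholomorphic Fock spaces) and the uniformity in $\la$ near $\GA_0$. I expect this verification that the listed conditions are complete, rather than any single estimate, to be the main obstacle.
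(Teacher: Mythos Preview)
Your approach is correct and is essentially the paper's own proof: the authors simply observe that relation (\ref{identity Vk}), Theorems \ref{estimates of sequences rne 0} and \ref{estimates of sequences r 0}, and Theorem~4.10 of \cite{Lud-Ell-Abd} together allow one to invoke Theorem~3.5 of \cite{Lud-Reg1}, which is exactly what you do. One minor slip to clean up: your sentence claiming that sequences ``converging from $\GA_2$ into $\GA_1$'' need nothing beyond continuity contradicts what you (correctly) said two lines earlier --- that case is precisely $\la_k\al_k\to r^2/2>0$ and is handled by Theorem~\ref{estimates of sequences rne 0}, not by ordinary continuity on a fixed stratum.
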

\section{The \textit{NCDL}-property of  $ C^*(G_n)$.  }\label{cst gn}
For $n\geq1$, let $G^n=G_1\times\cdots\times G_1,$ $n$ times.
\begin{theorem}
 The $C^*$-algebra of the group $G^n$ is isomorphic to the tensor product of $C^*(G_1)\ol\otimes\cdots\ol\otimes C^*(G_1)$ ($n$ times) and 
 $C^*(G^n)$ has the \textit{NCDL} condition.
\end{theorem}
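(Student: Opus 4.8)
The plan is to prove the statement by induction on $n$, the case $n=1$ being precisely the theorem of the previous subsection, namely that $C^*(G^1)=C^*(G_1)\cong\D_1$ is an \textit{NCDL}-$C^*$-algebra. Before starting I would record for later use that $C^*(G_1)$ is separable (since $G_1$ is second countable) and nuclear (being liminary it is type $I$, hence nuclear), so that the iterated tensor product $C^*(G_1)\ol\otimes\cdots\ol\otimes C^*(G_1)$ is unambiguously defined.

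The first substantive step is the identification $C^*(G^n)\cong C^*(G_1)\ol\otimes C^*(G^{n-1})$. Writing $G^n=G_1\times G^{n-1}$, this is the classical fact that for second countable locally compact groups $H,K$ one has $C^*(H\times K)\cong C^*(H)\otimes_{\max}C^*(K)$: the space $C_c(H\times K)$ sits densely inside the algebraic tensor product $C_c(H)\otimes C_c(K)$, a $*$-representation of $H\times K$ is precisely a pair of $*$-representations of $H$ and of $K$ with commuting ranges, and matching the two universal norms shows that $C^*(H\times K)$ enjoys the universal property of $C^*(H)\otimes_{\max}C^*(K)$. Since $C^*(G_1)$ is nuclear, $\otimes_{\max}$ and $\otimes_{\min}$ coincide, which yields $C^*(G^n)\cong C^*(G_1)\ol\otimes C^*(G^{n-1})$. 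One could alternatively deduce this from $\wh{G^n}=\wh{G_1}\times\wh{G^{n-1}}$, Theorem \ref{thbijwhAtiwhBandwhAotiB}, and the discussion in the Example at the end of Section \ref{tensor prod}.

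The induction step is then immediate: assuming $C^*(G^{n-1})$ is \textit{NCDL}, both tensor factors of $C^*(G_1)\ol\otimes C^*(G^{n-1})$ are \textit{NCDL}-$C^*$-algebras, so Theorem \ref{tensor ncdl} shows that this tensor product—hence $C^*(G^n)$—is again \textit{NCDL}. Iterating over $n$ gives $C^*(G^n)\cong C^*(G_1)\ol\otimes\cdots\ol\otimes C^*(G_1)$ ($n$ factors), an \textit{NCDL}-$C^*$-algebra, as claimed.

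The only point requiring any care is the group-algebra identification $C^*(H\times K)\cong C^*(H)\ol\otimes C^*(K)$; everything else follows formally from Theorem \ref{tensor ncdl} and the results of Section \ref{ncdl of Gone}. I would also remark that the proof of Theorem \ref{tensor ncdl} is constructive, so that the norm controls for the properly converging sequences in $\wh{G^n}$ are obtained as $n$-fold tensor products of the explicit maps $\sigma_{r,k}$, $\sigma_{\la_\iy,k}$ (together with evaluation maps) built in Theorems \ref{estimates of sequences rne 0} and \ref{estimates of sequences r 0}; this simultaneously yields a concrete description of $\F(C^*(G^n))$.
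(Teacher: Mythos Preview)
Your proposal is correct and follows essentially the same route as the paper: both reduce the statement to Theorem~\ref{tensor ncdl} applied to the factors $C^*(G_1)$, after identifying $C^*(G^n)$ with the $n$-fold tensor product. Your argument is in fact more careful than the paper's, which merely cites Theorem~\ref{thbijwhAtiwhBandwhAotiB} and Theorem~\ref{tensor ncdl} without spelling out the group-algebra identification $C^*(H\times K)\cong C^*(H)\ol\otimes C^*(K)$ that you justify explicitly.
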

\begin{proof} Since $C^*(G_1) $ is liminary, 
we have  by  Theorem \ref{thbijwhAtiwhBandwhAotiB},  that  $\wh{C^*(G^n)}\simeq\wh{C^*(G_1)}\times \cdots\times \wh{C^*(G_1)}$.
    
It suffices to apply Theorem \ref{tensor ncdl}.
\end{proof}

For 
$a\in C^*(G^n)$ the Fourier transform $\F $ is thus defined by 
 $$\F(a)(\pi_1\times \cdots\times\pi_n)=\wh a(\pi_1\times\cdots\times\pi_n)=\pi_1\otimes\cdots\otimes\pi_n(a)=,
\ (\pi_i)_{1\leq i\leq n}\subset\wh{C^*(G_1)}.$$ 
 In particular for elementary tensors  $a_1\otimes a_2\otimes\cdots\otimes a_n $ 
 we obtain then
 \begin{eqnarray*}
 \F(a)(\pi_1\times \cdots\times\pi_n)
 =\pi_1(a_1)\otimes\cdots \otimes \pi_n(a_n)\in\B(\H_{\pi_1}\otimes\cdots\otimes \H_{\pi_n}).
 \end{eqnarray*}

 We consider now the centre $ \ZZ^n$ of the groups $ G^n$.
 This subgroup $ \ZZ^n$  of $ G^n$  is given by 
 \begin{eqnarray*}
 \ZZ^n=\{(1,0,t_1)\times \cdots\times(1,0,t_n);\ t_1, \cdots, t_n\in\R \}.
 \end{eqnarray*}
Let  $ \{Z_1, ,\cdots, Z_n\}$  be the canonical basis of the centre $\z $ of the Lie algebra $\h_n $ of the group $\HH_n $. 
This means that
\begin{eqnarray*}
 Z_j=(0,0)\times\cdots (0,0)\times (0,1)\times (0,0)\cdots\times (0,0)\in \C^n\ti \R^n,
 \end{eqnarray*}
$(0,1) $ appearing at the $j $th position. We denote by $ \z_0$  the subspace
\begin{eqnarray*}
 \z_0:=\left\{\sum_{j=0} ^{n} z_j Z_j;\ \sum_{j=0}^{n }z_j=0\right\}
 \end{eqnarray*}
 of $\z $. Then $ \z_0$  is   of codimension 1 in $ \z$ and
 \begin{eqnarray*}
  \ZZ_0^n:=\exp(\z_0)=\left\{(1,0,t_1)\times\cdots \times(1,0,t_n); \sum_{j=1}^{n}t_j=0\right\}
 \end{eqnarray*}
 is  a closed connected and central subgroup of $ G ^{n}$.
 The quotient group $ G^{n}/\ZZ_0^n$ is then isomorphic to $ G_n=\T^{n}\ltimes \HH_n$. 
 To see this, it suffices to consider the canonical basis $ \{T_j,X_j,Y_j, Z_j; j=1,\cdots, n\}$ of $ \g^{n}$ 
 and compute the non trivial brackets in this Lie algebra modulo $ \z_0$. Let $ Z:=\frac{1}{n}(\sum_{j=1}^{n}Z_j)$.
 We have 
 \begin{eqnarray*}
  [ T_j,X_j]=Y_j,\  [ T_j,Y_j]=-X_j \text{ modulo }  \z_0,\\
\end{eqnarray*}
 \begin{eqnarray*}
 [ X_j,Y_j]=Z_j=Z+(Z_j-Z)=Z\text{ modulo }\z_0
 \end{eqnarray*}
since $ Z_j-Z\in\z_0$.

The spectrum of the group $ G_n$  can now be identified with the spectrum of  $ \widehat{G^{n}/\ZZ_0^n}$, which is the subset of $ \widehat{G^{n}}$  consisting of the $ n$ -tuples
$ \pi_1\times\cdots\times \pi_n\in \widehat{G^{n}}$ such that 
$ \pi_1\otimes\cdots\otimes \pi_n((1,0,z_1)\times\cdots\times(1,0, z_n))=\Id$,  if $ \sum_{j=1}^{n}z_j=0$. This means that 
\begin{eqnarray*}
 \widehat{G_n}&\simeq &
 \wh {G^n_0}=:\{((\la_1,\cdots, \la_n),(\al,\cdots, \al));\al\in\R^{*}, \la_j\in\Z, j=1,\cdots, n\}\bigcup \{\pi\in\widehat{G^{n}}, 
 \pi=\Id \text{ on }\ZZ_0^n\}.
 \end{eqnarray*}
Let \begin{eqnarray*}
 \K=\{a\in C^*(G^{n});\pi(a)=0,\ \forall \pi\in \widehat{G^{n}}\text{ which are trivial on }\ZZ_0^n\}.
 \end{eqnarray*} 
 \begin{theorem}
  The $C^*$-algebra of the Heisenberg motion group $C^*(G_n)$ is isomorphic to $C^*(G^n)/\K$ and  $C^*(G_n)$ satisfies the \textit{NCDL} condition. 
 \end{theorem}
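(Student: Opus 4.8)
The plan is to exhibit $C^*(G_n)$ as the quotient of the $NCDL$-algebra $C^*(G^n)$ by the ideal $\K$, and to invoke the (already established) stability of the $NCDL$ property. First I would record that, by the previous theorem, $C^*(G^n) \simeq C^*(G_1)^{\otimes n}$ is $NCDL$, hence liminary and separable, so every quotient of it is again liminary and separable. The identification $C^*(G_n) \simeq C^*(G^n)/\K$ is a direct consequence of the group-theoretic fact, computed above via the brackets modulo $\z_0$, that $G_n \simeq G^n/\ZZ_0^n$: representations of $G^n/\ZZ_0^n$ are exactly the representations of $G^n$ that are trivial on $\ZZ_0^n$, and $\K$ is by definition the kernel of all of them, i.e. $\K$ is the kernel of the canonical surjection $C^*(G^n) \twoheadrightarrow C^*(G^n/\ZZ_0^n) = C^*(G_n)$. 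Thus $\widehat{C^*(G_n)}$ is the closed subset $\wh{G^n_0} \subset \widehat{G^n}$ described above, with the relative topology.

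The heart of the matter is that an ideal (equivalently, a quotient by an ideal corresponding to a closed subset of the spectrum) of an $NCDL$-algebra is again $NCDL$. For this I would argue as follows. Let $Q = C^*(G_n) = C^*(G^n)/\K$, and let $U = \widehat{\K}$ be the open subset of $\widehat{G^n}$ on which $\K$ lives, so $\widehat{Q}$ is the complementary closed subset $\wh{G^n_0}$. The step filtration $\emptyset = S_{-1} \subset S_0 \subset \dots \subset S_{nm} = \widehat{G^n}$ of $\widehat{G^n}$ restricts to a step filtration $S_i \cap \wh{G^n_0}$ of $\widehat{Q}$, with strata $\Gamma_i \cap \wh{G^n_0}$ that are locally compact and Hausdorff in their relative topology (intersection of a locally compact Hausdorff space with a closed set). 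On each stratum the representations of $Q$ are exactly the restrictions of representations of $C^*(G^n)$, realized on the same Hilbert spaces; the Fourier transform $\F_Q(a + \K)(\gamma) = \F_{C^*(G^n)}(a)(\gamma)$ for $\gamma \in \widehat{Q}$ is then continuous on each stratum and vanishes at infinity because this already holds for $C^*(G^n)$ and $\widehat{Q}$ carries the subspace topology. Hence $\F_Q(Q)$ is continuous of finite step, and Theorem \ref{newcor} immediately yields that $Q$ has the $NCDL$ property: one simply takes, for each properly convergent sequence $\overline\gamma$ in a stratum of $\widehat{Q}$ with limit set $L(\overline\gamma) \subset \widehat{Q}$, the completely positive contractive maps $\sigma_{\overline\gamma,k}$ supplied by that theorem.

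The main obstacle to watch is the uniformity and closedness bookkeeping: one must check that $\wh{G^n_0}$ really is closed in $\widehat{G^n}$ (clear, since it is the common zero set of the operator fields $\wh a$, $a \in \K$, equivalently the set where the ideal $\K$ vanishes), and that limit sets of sequences inside $\widehat{Q}$ stay inside $\widehat{Q}$ — but this is automatic since $\widehat{Q}$ is closed. A secondary point is that Theorem \ref{newcor} is stated for a separable liminary $C^*$-algebra whose Fourier transform is continuous of finite step, and one should note explicitly that $Q$ meets these hypotheses (separable as a quotient of a separable algebra; liminary as a quotient of a liminary, i.e. type I, algebra; continuous of finite step by the restriction argument above). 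Once these are in place the statement follows, and if one wants the $NCDL$ data of $C^*(G_n)$ described concretely rather than abstractly, one restricts the explicit controls $\sigma_{r,k}$ and $\sigma_{\la_\iy,k}$ built for $C^*(G_1)$ in Theorems \ref{estimates of sequences rne 0} and \ref{estimates of sequences r 0}, tensors them as in the proof of Theorem \ref{tensor ncdl}, and then restricts to the closed subset $\wh{G^n_0}$; I would mention this only briefly, as the abstract argument already establishes the theorem.
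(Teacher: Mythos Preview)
Your proposal is correct and follows essentially the paper's route: identify $G_n \simeq G^n/\ZZ_0^n$, hence $C^*(G_n) \simeq C^*(G^n)/\K$ via the canonical surjection, and then restrict the NCDL structure of $C^*(G^n)$ to the closed subset $\wh{G^n_0} \subset \wh{G^n}$. The only cosmetic difference is one of emphasis: you invoke Theorem~\ref{newcor} on the quotient to conclude abstractly, whereas the paper writes down the explicit norm controls $\si_{\ol\ga} = \si_{\ol{\ga^1}} \otimes \cdots \otimes \si_{\ol{\ga^n}}$ obtained by tensoring the $G_1$ data along the factorization $\ga_k = \pi_k^1 \otimes \cdots \otimes \pi_k^n$---which is exactly the concrete construction you sketch in your final paragraph.
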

\begin{proof}
The ideal  $ \K=\{a\in C^*(G^{n});\pi(a)=0,\ \forall \pi\in \widehat{G^{n}}\text{ which are trivial on }\ZZ_0^n\}$ is the kernel in $C^*(G^n)$ 
 of the canonical surjective  homomorphism $\delta^n: C^*(G^n)\longrightarrow C^*(G_n)$, which is defined on $\L1{G^n} $ by
 \begin{eqnarray*}
 \de^n(F)(g):=\int_{\ZZ_0^n}F(g z)dz, g\in G^n.
 \end{eqnarray*}
Then the $C^*$-algebra 
 of the Heisenberg motion group $C^*(G_n)$ is isomorphic to $C^*(G^n)/\K.$
 
 Let $\rh^n $ be the restriction map 
 \begin{eqnarray*}
 \left.\begin{array}{cccc}
 & & & \\
\rh^n: & l^\iy(\wh {G^n})& \longrightarrow& l^{\iy}(\wh{G^n_0})\\
&\ph&\mapsto&\rh^n(\ph)=
 \ph\res{\wh{G^n_0}}\end{array}\right.
 \end{eqnarray*}
for any uniformly bounded operator field $\ph $ defined on $\wh{G^n} $.

Then the $C^* $-algebra of the group $G_n $ can be identified with the sub-algebra   $\rh^n (\F(C^*(G^n)) )$ of the algebra  $l^\iy(\wh{G^n_0}) $.
In particular,  if we have a properly convergent sequence
$\ol{\ga}=(\ga_k)_k $ in $\wh {G_n} $, then we can write
\begin{eqnarray*}
 \ga_k=\pi^1_k\otimes\cdots\otimes \pi^n_k,k\in\N,
 \end{eqnarray*}
where $\ol{\ga^j}=(\pi^j_k)_k $ is  a properly convergent sequence in $\wh{G_1} $ with limit set $L_j $, $j=1,\cdots, n $. Let $\si_{\ol{\ga^j},k}, k\in\N, $ be the corresponding norm control. Then, 
\begin{eqnarray*}
 \si_{\ol \ga}=\si_{\ol{\ga^1}}\otimes\cdots \otimes \si_{\ol{\ga^n}}.
 \end{eqnarray*}

\end{proof}

\end{document}